\newtheorem*{theorem*}{Theorem}
\newtheorem{theorem}{Theorem}
\newtheorem{lemma}[theorem]{Lemma}
\newtheorem{corollary}[theorem]{Corollary}
\newtheorem{proposition}[theorem]{Proposition}
\newtheorem{definition}{Definition}
\newtheorem{example}{Example}
\begin{document}
 
\title{The minima of the geodesic length functions of uniform filling curves}


\author[$\dagger$]{Ernesto Girondo}
\author[$\ddag$]{Gabino Gonz\'alez-Diez}
\author[$\star$]{Rub\'en A. Hidalgo}

\affil[$\dagger$, $\ddag$]{Departamento de Matem\'aticas, Universidad Aut\'onoma de Madrid and Instituto de Ciencias Matem\'aticas ICMAT (CSIC-UAM-UCM-UC3M). 28049-Madrid, Spain. \ \texttt{ernesto.girondo@uam.es, gabino.gonzalez@uam.es}}
\affil[$\star$]{Departamento de Matem\'atica y Estad\'{\i}stica, Universidad de La Frontera. Campus  Andr\'es Bello, Avenida Francisco Salazar  01145 CASILLA 54-D
Temuco, Chile. \texttt{ruben.hidalgo@ufrontera.cl}}

\renewcommand{\thefootnote}{\fnsymbol{footnote}} 
\footnotetext{MSC2020: 14H57, 30F60, 30F45}     
\renewcommand{\thefootnote}{\arabic{footnote}}

\maketitle

\begin{abstract}
There is a natural link between (multi-)curves that fill up a closed oriented surface and dessins d'enfants. We use this approach to exhibit explicitly the minima of the geodesic length function of a kind of curves (uniform filling curves) which include those that admit a homotopy equivalent representative such that all self-intersection points as well as all faces of their complement have the same multiplicity. We show that these minima are attained at the Grothendieck-Belyi surfaces determined by a natural dessin d'enfant associated to these filling curves. In particular they are all Riemann surfaces defined over number fields. 
\end{abstract}

\section{Introduction and statement of results}  \label{subsec:intro}

Geodesic length functions have been a classical subject in Teichm\"uller theory. We shall start by setting our notation and recalling some fundamental facts of this theory.

\

Let $X$ be a closed oriented surface of genus $g\ge 2$. A marking on $X$ is a pair $(f,R)$, where $R$ is a closed Riemann surface and $f:X\to R$ is an orientation-preserving homeomorphism. Two markings $(f_1, R_1)$ and $(f_2, R_2) $ are considered equivalent if there is an isomorphism $\Phi: R_1 \to R_2$ such that $f_2^{-1}\circ \Phi \circ f_1$ is isotopic to the identity map in $X$. The set of equivalence classes of markings is the Teichm\"uller space $T(X)$. It is well-known that $T(X)=T_g$ is a contractible complex manifold of dimension $3g-3$. The modular (or mapping class) group $\mathrm{Mod}_g$ (or $\mathrm{MCG}_g$) is the group of isotopy classes of orientation preserving homeomorphisms of $X$. It acts on $T_g$ by the rule
$$
\begin{array}{rcl}
\mathrm{Mod}_ g  \times  T_g & \longrightarrow & T_g \\
(h, [f,R]) & \longmapsto & [f \circ h^{-1}, R]
\end{array}
$$
and the quotient is the moduli space $\mathcal{M}_g$ of (isomorphy classes of) closed Riemann surfaces or, equivalently, complete complex algebraic curves, of genus $g$. We will say that a closed Riemann surface \emph{is defined over a number field} if it is the Riemann surface of an algebraic curve defined over a number field. Such Riemann surfaces give rise to a countable but dense subset of $\mathcal{M}_g$.  These definitions readily extend to the case of Riemann surfaces of genus $g$ with $n$ distinguished points to produce the analogous mapping class group $\mathrm{Mod}_{g,n}$ and Teichm\"uller and Moduli spaces $T_{g,n}$ and  $\mathcal{M}_{g,n}$ of complex dimension $3g-3+n$ (see e.g. \cite{Bers_1981}).

\

Given a homotopically non-trivial closed curve $\gamma \subset X$ and a marking $(f,R)$ one may consider the unique closed geodesic $\gamma_{(f, R)}\subset R$ in the same homotopy class  as  
$f(\gamma)$. Equivalent markings produce identical geodesics, and therefore the rule that to each point $[f,R]\in T_g$ associates the length  $\ell_{\gamma}([f,R])$ of the geodesic $\gamma_{(f, R)}$ produces a well-defined function $\ell_{\gamma}:  T(X)  \longrightarrow  (0, \infty)$, called the \emph{geodesic length function}. If instead of a single curve $\gamma$ we have a collection of (freely) homotopically non-trivial and pairwise homotopically distinct curves $\Gamma=\{ \gamma_1, \ldots , \gamma_r \}$ the length function $\ell_{\Gamma}$ is defined to be the sum of the lengths of the corresponding geodesics. In case $r>1$ we sometimes refer to $\Gamma$ as a \emph{multicurve}.

A famous result, first proved by Kerckhoff, states that if $\Gamma$ is a filling multicurve of $X$ then 
 the function $\ell_{\Gamma}$ attains a minimum in Teichm\"uller space and that  this minimum is unique (see \cite{Kerckhoff_1983}, \cite{Wolpert_1987} and the related work \cite{Bestvina_et_al_2013}), thereby inducing a unique hyperbolic structure on $X$. We say that a multicurve $\Gamma$ in minimal position fills up $X$, or simply that $\Gamma$ is a filling multicurve, if 
 every non-trivial simple loop has non-empty intersection with  some component $\gamma_i$ of $\Gamma$. When $\Gamma=\{ \gamma \}$ has only one component we say that $\gamma$ is a \emph{filling curve}. Figure \ref{fig_g2a} shows examples of curves that fill up the surfaces of genus 2 and 3.

\begin{figure}[!htb]
\centering{
\begin{tabular}{lcr}
\includegraphics[width=0.38\textwidth]{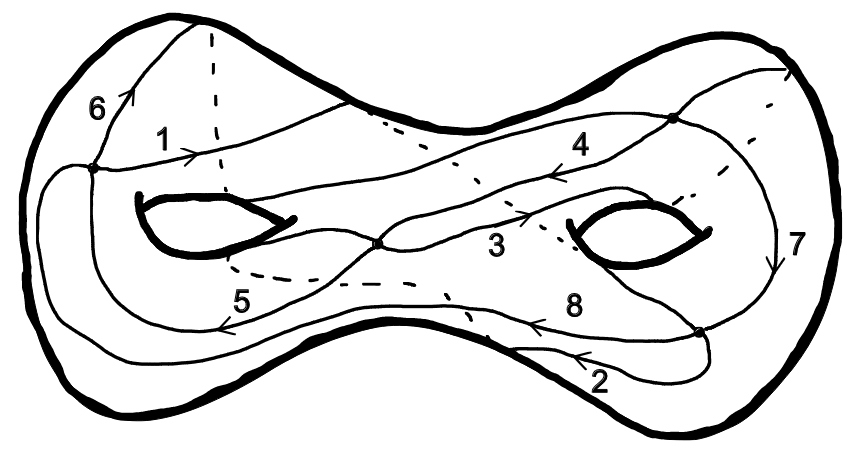} & \quad \quad 
& \includegraphics[width=0.5\textwidth]{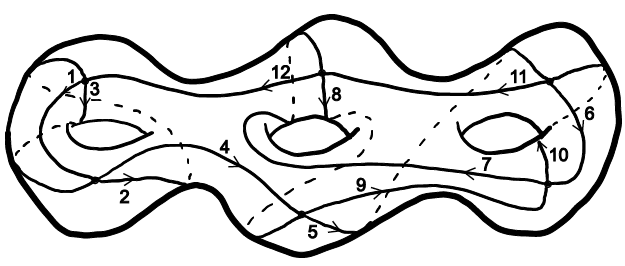}
\end{tabular}}
\caption{Curves that fill up the surfaces of genus 2 and 3. Consecutive numbers label consecutive arcs connecting self-intersection points of the curve.}\label{fig_g2a}
\end{figure}

A filling multicurve $\Gamma \subset X$ in minimal position divides the surface into a disjoint union of faces and so, by placing a (white) vertex between any two consecutive self-intersection points 
(which we regard as black vertices), $\Gamma$ is readily made into a bipartite graph of the kind Grothendieck called \emph{dessins d'enfants}. From a combinatorial point of view this dessin, we shall  denote it as $\mathcal{D}_{\Gamma}$, will be completely determined by two permutations $\sigma_0$ and $\sigma_1$ describing the cyclic ordering of the edges around the white and the black vertices respectively. Note that, by construction, the number $E=E(\Gamma)$ of edges of  $\mathcal{D}_{\Gamma}$ is twice the number $d$ of components of $\Gamma \smallsetminus \{\mbox{self-intersection points}\}$, which we  call the \emph{arcs} of $\Gamma$. It was observed by Grothendieck that such a dessin $\mathcal{D}_{\Gamma}$ endows the topological surface $X$ with a hyperbolic structure, usually referred to as the Grothendieck-Belyi surface of $\mathcal{D}_{\Gamma}$, we shall denote it as $S_{\Gamma}=\mathbb{D}/K$. Grothendieck-Belyi surfaces are defined over a number field and are uniformized by a  finite index  Fuchsian subgroup $K$ of the triangle group $\Delta(2,2m,k)$,  where $2m$ and $k$ are the least common multiples of the degrees of black vertices and faces of the graph $\mathcal{D}_{\Gamma}\subset X$. The group $K$ is determined by the combinatorics of $\Gamma$ as the inverse image of the one point stabilizer subgroup of the symmetric group $\mathbb{S}_{2d}$ via an obvious homomorphism $\Delta(2,2m,k) \longrightarrow \mathbb{S}_{2d}$ determined by the pair of permutations $\sigma_0, \sigma_1$.
In principle $K$ need not be torsion free but it will happen to be so in the case of the (uniform) filling multicurves we are primarily considering in this article.
(We recall that $\mathbb{D}/\Delta(2,2m,k)$ is the Riemann surface of genus 0 as its fundamental domain is a quadrilateral formed as the union of two copies of the triangle $T_{2,2m,k}$ with angles $\pi/2, \pi/2m$ and $\pi/k$).

Thus, from quite different points of view, the theories of dessins d'enfants and geodesic length functions allow us to associate to a filling multicurve  $\Gamma \subset X$ two Riemann surface structures on $X$. In this paper we investigate the relation between them. The results we obtain will enable us to explicitly determine the minima of the geodesic length function of a family of multicurves that include those which admit a homotopy equivalent representative such that their self-intersection points as well as their faces have the same multiplicity. To our knowledge there are no instances 
in the literature of geodesic length functions of filling curves whose minima have been explicitly identified.

More precisely, our main results are as follows:

\begin{enumerate}
\item Suppose that a filling multicurve of $X$ admits a homotopy equivalent representative $\Gamma=
\{ \gamma_1, \ldots , \gamma_r \}$ with $d$ arcs and that all its self-intersection points have a common degree $2m$ and all its faces carry  a common number $k$ of arcs. Then the minimum of the geodesic length function $\ell_{\Gamma}$ is realized by the Grothendieck-Belyi surface $S_{\Gamma}=\mathbb{D}/K$ and the geodesic $\beta^{-1}(L_{m,k})\subset S_{\Gamma}$, where $\beta: \mathbb{D}/K \longrightarrow \mathbb{D}/\Delta(2,2m,k)$ stands for the quotient map induced by the group inclusion $K < \Delta(2,2m,k)$ and $L_{m,k}$ is the projection to $\mathbb{D}/\Delta(2,2m,k)$ of the edge of the triangle $T_{2,2m,k}$ opposite to the angle $\pi/k$. The length of this geodesic being 
$$
\min_{[f,S]\in T_g} \ell_{\Gamma}([f,S])= 2d \cdot (\mbox{length of } L_{m,k})
$$ 
or, equivalently, $d$ times the length of the side of the regular $k$-gon of angle $\pi/m$ (see Theorem \ref{th:main}).
\item Conversely, any genus $g$ surface subgroup $K<\Delta(2,2m,k)$ uniformizes a Riemann surface $S=\mathbb{D}/K$ which realizes the minimum of the geodesic length function $\ell_{\Gamma}$ of some filling multicurve $\Gamma$. In fact, with the notation as in point 1, the multicurve $\Gamma$ can be taken to be $\beta^{-1}(L_{m,k})\subset S$ and $S$ will then agree with the Grothendieck-Belyi surface $S_{\Gamma}$.

Moreover, if $\sigma_0$, $\sigma_1$ are the two permutations representing the dessin $\mathcal{D}_{\Gamma}$, the number $r$ of components of $\Gamma$ coincides with half of the number of disjoint cycles of $\sigma_1^m \sigma_0$ (Theorem \ref{th:comb}).
\item Statements similar to 1 and 2 hold if one replaces the triangle group $\Delta(2,2m,k)$ by a triangle group of the form $\Delta(2l,2m,k)$ (see Proposition \ref{pr:destomulticurve_2}).
\item For each genus $g$, the groups $\Delta(2,4,8g-4)$ and $\Delta(2,4,4g)$  contain a surface subgroup $K$ of genus $g$ such that the multicurve referred to in 2
produces a filling curve (that is, with only one component) in general position whose corresponding length function attains its minimum at the hyperbolic surface $S=\mathbb{D}/K$. The complement of this curve has one component in the first case and two in the second one 
 (Theorem \ref{th:exist}).
\end{enumerate}

\section{A quick review of the Grothendieck-Belyi theory of dessins d'enfants} \label{sec:review_dessins}

In this section we summarize the basics of the theory of dessins d'enfants and Belyi surfaces. We refer to  \cite{Girondo_Gonzalez-Diez_2012,Jones_and_Wolfart_2016} for a more detailed exposition.

\

A \emph{dessin d'enfant} is a connected bipartite finite graph $\mathcal{D}$ embedded in a closed oriented surface $X$ in such a way that the connected components of $ X \smallsetminus \mathcal{D}$, which are called \emph{faces}, are 2-cells, i.e.  homeomorphic to open discs.  The vertices of a dessin can be coloured white or black in such a way that the two  vertices of any edge do not have the same colour. A vertex $v$ has degree $m$ if the number of edges incident to $v$ is precisely $m$, whereas a face (or 2-cell) $C$ has degree $k$ if its boundary $\partial C$ consists of $2k$ edges of $\mathcal{D}$ (where an edge $e$ must be counted twice if it meets the same face at both sides, equivalently if no other face contains $e$ in its boundary). Two dessins d'enfants are called {\it equivalent} if there is an orientation-preserving homeomorphism of $X$ that induces a colour-preserving isomorphism of the corresponding bipartite graphs.

\

The \emph{passport} of a dessin ${\mathcal D} \subset X$ is the tuple
$(a_{1},\ldots,a_{W};b_{1},\ldots,b_{B};c_{1},\ldots,c_{F})$ given by the degrees of the white vertices, black vertices and faces, respectively.  
 If $a, b$ and $c$ are the the least common multiples of $\{a_{i} \} $, $\{ b_{j} \}$ and $\{ c_{k} \}$, then we 
  say that ${\mathcal D}$ has \emph{type} $(a,b,c)$.    
 The \emph{degree} $\mathrm{deg}(\mathcal{D})$ of the dessin is the number $E$ of edges of $\mathcal{D}$, which is related to the degrees of vertices and faces by the relations   
$E=a_{1}+\cdots+a_{W}=b_{1}+\cdots+b_{B}=c_{1}+\cdots+c_{F}$. 
As a consequence of Euler's formula, the genus $g$ of $X$ is determined by the relation
$2g-2=E-W-B-F$. We refer to $g$ also as the genus of $\mathcal{D}$.

If $(a_j, b_k, d_l)=(a,b,c)$ for every choice of $j,k,l$, then  the dessin $\mathcal{D}$ is called \emph{uniform}. When $a=2$ the dessin is called \emph{clean}.
Uniform clean dessins will be particularly relevant in this work.
 
\

If we label the edges of $\mathcal{D}$ from $1$ to $E$, the orientation of $X$ induces  permutations $\sigma_0, \sigma_1 \in \mathbb{S}_E$ encoding the cyclic ordering of the different edges around the white  and black vertices, respectively.  The pair $(\sigma_0, \sigma_1)$ is called the \emph{permutation representation} of  $\mathcal{D}$, and the group $M= \langle \sigma_0, \sigma_1 \rangle$, which is transitive since $\mathcal{D}$ is connected, is called the \emph{monodromy group} of the dessin. Conversely, any pair of permutations $\sigma_0, \sigma_1$ generating a transitive subgroup of the symmetric group $\mathbb{S}_E$ arises as the permutation representation of a dessin d'enfant $\mathcal{D}$ with $E$ edges. The disjoint cycles of $\sigma_0$, $\sigma_1$ and $\sigma_{\infty}:=(\sigma_0 \sigma_1)^{-1}$ are in bijective correspondence with  the white vertices, black vertices and faces of $\mathcal{D}$, and their lengths encode its passport (see Figure \ref{fig_monodromy}). For instance, being uniform means that the permutations $\sigma_0, \sigma_1$ and $\sigma_{\infty}$ decompose as products of disjoint cycles of equal length.

\begin{figure}[!htbp]
\centering{
\includegraphics[width=0.32\textwidth]{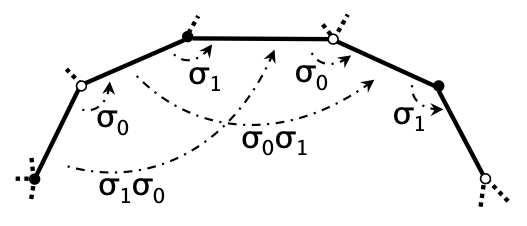} 
\caption{The monodromy of a dessin.}\label{fig_monodromy}
}
\end{figure}

As it is customary in the theory of topological surfaces, one can construct a planar model of $X$ by cutting it open appropriately along $\mathcal{D}$ to form a planar polygon $P$ endowed with the corresponding  identifications $\sim$ between pairs of sides in $\partial P$ that allow us to reconstruct the topology of $X$ as the quotient space $P/\sim$. This can be done in such a way that a face $C_i$ of $\mathcal{D}$ of degree $c_i$ corresponds to a topological $2c_i$-gon $P_i$ contained in $P$. Since $X=\bigcup_i C_i$ one has $P= \bigcup P_i$. The edges of $\mathcal{D}$  account for all the sides of the polygons $P_i$: an edge $e$ either lies in the interior of $P$ or on its boundary, and in the second case $e$ occurs twice in $\partial P$ by virtue of the side-pairing identifications. Of course, when $\mathcal{D}$ is a uniform dessin, all these polygons $P_i$ have the same number of edges.

\

\begin{example} \label{ej:M8g2plano} \rm{
Let $\sigma_0$ and $\sigma_1$ be the following permutations in  $\mathbb{S}_{16}$
$$
\begin{array}{c}
\sigma_0 =(1,16)(2,15)(3,14)(4,13)(5,12)(6,11)(7,10)(8,9)\\
  \sigma_1=(1,6,9,12)(2,10,16,8)(3,13,15,5)(4,7,14,11)
  \end{array}
$$

This is the monodromy pair of a dessin $\mathcal{D}\subset X$ with 16 edges, 8 white vertices of degree 2 and 4 black vertices of degree four. Since $\sigma_1 \sigma_0=(1,8,12,3,11,9,2,5)(4,15,10,14,13,7,16,6)$, $\mathcal{D}$ has two faces (both of degree 8), and it follows that the genus of $X$ is $g=2$. A planar model for the inclusion $\mathcal{D} \subset X$ can be obtained from two 16-gons by adding the labels provided by the cycles of $\sigma_1 \sigma_0$ to half the edges of each of the 16-gons, and using $\sigma_0$ to label the remaining edges. The result is depicted in the left hand side of Figure \ref{fig_ej_planog2}.

\begin{figure}[!htbp]
\centering{
\begin{tabular}{ccc}
\includegraphics[width=0.48\textwidth]{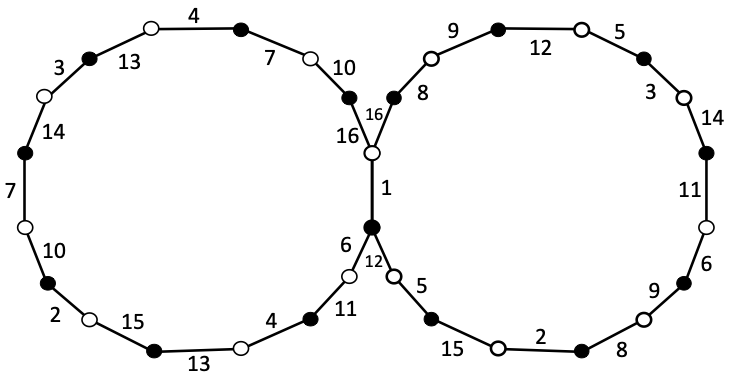} & \quad \quad  & \includegraphics[width=0.42\textwidth]{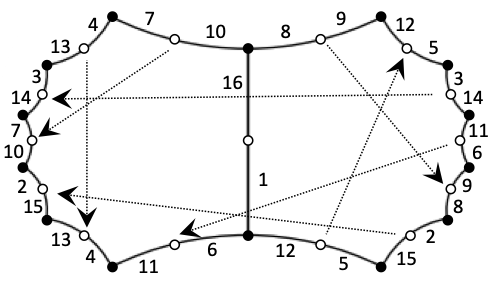} 
\end{tabular}
}
\caption{On the left side, a purely topological planar model of $\mathcal{D}\subset X$ constructed from the monodromy pair. On the right hand side, a topologically equivalent construction carried out in   hyperbolic space, determined by the Fuchsian representation of the same dessin.}\label{fig_ej_planog2}
\end{figure}
}
\end{example}

One of the key points of the theory of dessins d'enfants is that a dessin $\mathcal{D}\subset X$ determines the following data:
\begin{itemize}
\item A triangular decomposition of $X$ obtained by drawing a topological segment from each vertex (white or black) to a common center of the faces they belong, so that each edge of $\mathcal{D}$ becomes the side of two adjacent topological triangles which can be coherently provided with alternating labels $N$ and $S$.
\item A continuous surjection $p_{\mathcal{D}}: X \longrightarrow \widehat{\mathbb{C}}$ ramified over $\{0,1,\infty\}$ 
that maps each triangle labeled $N$ (resp. $S$) homeomorphically to the Northern (resp. Southern) Hemisphere in such a way that the inverse image 
$p_{\mathcal{D}}^{-1}([0,1])\subset X$ of the unit interval $[0,1]\subset \mathbb{C}$ is precisely the dessin $\mathcal{D}$ with white vertices corresponding to the points of $p_{\mathcal{D}}^{-1}(0)$, black vertices to $p_{\mathcal{D}}^{-1}(1)$ and face centers to $p_{\mathcal{D}}^{-1}(\infty)$.
\item A Riemann surface structure on $X$ provided by the atlas $\mathcal{A}_{\mathcal{D}}$ determined by using the local homeomorphism $p_{\mathcal{D}}$ to define local charts away the vertices and centers of faces of $\mathcal{D}$, so that $p_{\mathcal{D}}:(X, \mathcal{A}_{\mathcal{D}}) \longrightarrow \widehat{\mathbb{C}}$ becomes a covering of Riemann surfaces ramified only over $\{ 0, 1 , \infty \}$. Up to equivalence, this covering is independent of the choice of the triangulation associated to $\mathcal{D}$.
\end{itemize}

The so-called \emph{Belyi pair} $( (X, \mathcal{A}_{\mathcal{D}}) , p_{\mathcal{D}})$ corresponding to a dessin $\mathcal{D}$ of type $(a,b,c)$, which we shall denote more simply by $(S_{\mathcal{D}},\beta)$,  can be explicitly described in terms of Fuchsian groups. Let
$$
\Delta(a,b,c)= \langle x, y, z \  | \ x^a=y^b=z^c =xyz=1 \rangle < \mathrm{Aut}(\mathbb{D}) \simeq \mathbb{P}\mathrm{SL}(2, \mathbb{R})
$$
be a Fuchsian triangle group generated by three hyperbolic rotations $x,y$ and $z$ of angle $2\pi/a$, $2\pi/b$ and $2\pi/c$ respectively (elliptic isometries of orders $a,b$ and $c$), whose fixed points $w_a,w_b,w_c$ are the three vertices, ordered counterclockwise, of a hyperbolic triangle $T$ of angles $\pi/a, \pi/b, \pi/c$. The quadrilateral $Q$ given by the union of $T$ and its reflection along the side $[w_a, w_c]$ 
has vertices at $w_b, w_c, x(w_b)$ and $w_a$, with angles  $\pi/b, 2\pi/c, \pi/b$ and $2\pi/a $,  respectively. It  is a fundamental domain for $\Delta= \Delta(a,b,c)$. 
As $x$ maps the hyperbolic segment $[w_a, w_b]$ to $[w_a,x(w_b)]$ and $z$ maps $[w_c,x(w_b)]$ to $[w_c,w_b]$, the quotient space $\mathbb{D}/\Delta=Q/\Delta$ is isomorphic to the Riemann sphere $\widehat{\mathbb{C}}$, and we can realize the identification $\mathbb{D}/\Delta\simeq \widehat{\mathbb{C}}$  in such a way that the equivalence classes $[w_a]_{\Delta}, [w_b]_{\Delta},[w_c]_{\Delta} \in \mathbb{D}/\Delta$ correspond to $0, 1, \infty$ and, moreover, the hyperbolic segments $[w_a, w_b], [w_b, w_c]$ and $[w_c,w_a]$, lying in the boundary of $Q$, project to the segments $[0,1], [1, \infty], [-\infty,0]$ contained in $(\mathbb{R}\cup \{\infty \}) \subset \widehat{\mathbb{C}}$. We also observe that under this identification the Northern and Southern Hemispheres correspond to the triangle $T$ and its reflection.

\

The permutation representation of a dessin $\mathcal{D} \subset X$ of type $(a,b,c)$ and $E$ edges induces  a group homomorphism $\omega: \Delta(a,b,c) \longrightarrow \mathbb{S}_E$, called the \emph{monodromy homomorphism}, from the Fuchsian triangle group $\Delta(a,b,c)$ to the symmetric group in $E$ elements $\mathbb{S}_E$, 
determined by $\omega(x) = \sigma_0$, $\omega(y) = \sigma_1$ and $\omega(z)=\sigma_{\infty}$, whose image is the monodromy group $M$. It turns out that if $\mathrm{Stab}_M(1)$ is the stabiliser subgroup in $M$ of the edge labelled 1 and  $K=\omega^{-1}(\mathrm{Stab}_M(1))$ denotes its preimage, an index $E$ subgroup in $\Delta(a,b,c)$, then the quotient morphism  $\beta:S_{\mathcal{D}}=\mathbb{D}/K \longrightarrow \mathbb{D}/\Delta\simeq \widehat{\mathbb{C}}$  induced by the inclusion $K<\Delta(a,b,c)$ ramifies only over $[w_a]_{\Delta}\simeq 0, [w_b]_{\Delta}\simeq 1, [w_c]_{\Delta}\simeq \infty$ and is equivalent to the covering $p_{\mathcal{D}}: (X, \mathcal{A}_{\mathcal{D}}) \longrightarrow \widehat{\mathbb{C}}$; that is, there is an orientation-preserving homeomorphism $\varphi_{\mathcal{D}}: X \longrightarrow S_{\mathcal{D}}$ such that the following diagram commutes: 
$$
\xymatrix{ X \ar[rr]^{\varphi_{\mathcal{D}}} \ar[dr]_{p_{\mathcal{D}}} & & S_{\mathcal{D}} 
\ar[dl]^{\beta}
\\
 & \widehat{\mathbb{C}} & }
 $$

(Note that $\varphi_{\mathcal{D}}$ becomes automatically a isomorphism of Riemann surfaces when the topological surface $X$ is endowed with the Riemann surface structure provided by the dessin).

\

One can form a fundamental domain for $K$ as a union $Q_1\cup \ldots \cup Q_E$ of $\mathrm{deg}(\mathcal{D})= E$ copies of the quadrilateral $Q$. By construction, the  fibre of $\beta$ over the point $[w_c]_{\Delta}\simeq \infty$ consists of $F<E$ points $[w_1]_K, \ldots, [w_F]_K$ of branching orders $c_1, \ldots, c_F$ respectively. As a consequence  we can choose the quadrilaterals in such a way that $c_1$  of them have the same vertex $w_1$ of angle $2 \pi /c$, $c_2$ of them have the same vertex $w_2$ of angle $2 \pi /c$, etcetera. The inverse image of the unit interval $ [0,1] \simeq [w_a, w_b]\subset Q/\Delta = \mathbb{D}/\Delta$ consists of all the edges of the quadrilaterals $Q_1, \ldots , Q_E$ which  do not meet the points $w_1, \ldots, w_F$; and we make it a bicoloured graph by declaring the points in the fibres $\beta^{-1}(0)$ and $\beta^{-1}(1)$ as white and black points respectively. 

We observe that in view of the identification $\mathbb{D}/ \Delta \simeq \widehat{\mathbb{C}}$ made above, the commutativity of the last diagram implies that each of these hyperbolic quadrilaterals $Q_l$ corresponds via $\varphi_{\mathcal{D}}$ to a topological quadrilateral in $X$ formed by two adjacent topological triangles of the triangulation associated to $\mathcal{D}$. Likewise we
note that the union $P_j^*$ of the quadrilaterals meeting at a given point $w_j$ gives a hyperbolic version of one of the polygons which in the planar model described above corresponded to a 
 face; this is precisely the polygon $P_j$ that is mapped into $P_j^*$ via the homeomorphism $\varphi_{\mathcal{D}}$. 
 
Also by construction  the homeomorphism $\varphi_{\mathcal{D}}: X \longrightarrow S_{\mathcal{D}}$  restricts to an isomorphism of bicoloured graphs between $\mathcal{D}\subset X$ and $\beta^{-1}([0,1]) \subset S$. 

Finally, since, as mentioned above, the Belyi covering is well defined up to equivalence, a different choice of triangulation will result in a new homeomophism $\varphi'_{\mathcal{D}}: X \longrightarrow S_{\mathcal{D}}$ such that $\varphi'_{\mathcal{D}} \circ \varphi_{\mathcal{D}}^{-1} $ is an automorphism of $S_{\mathcal{D}}$.

 In other words, the dessin defines a point $[\varphi_{\mathcal{D}}, S_{\mathcal{D}}]\in T(X)$ which does not depend on the choices made to define the homeomorphism $\varphi_{\mathcal{D}}$.

\begin{definition} \rm
The pair $(S,\beta)$ and the Fuchsian group inclusion $K<\Delta(a,b,c)$ constructed above are called the \emph{Belyi pair} and the \emph{Fuchsian representation} of the dessin $\mathcal{D}\subset X$. The homeomorphism $\varphi_{\mathcal{D}}$ and the point  $[\varphi_{\mathcal{D}}, S_{\mathcal{D}}]\in T(X)$ will be referred to as the \emph{homeomorphism and the Grothendieck Teichm\"uller point associated} to $\mathcal{D}$.
\end{definition}

Note that the degree of $\mathcal{D}$, which is the number $E$ of edges, agrees with the degree of $\beta$ as a ramified covering of the sphere, and also with the index of the group inclusion $K<\Delta(a,b,c)$.

 \begin{example} \rm
 The picture at the right hand side of Figure \ref{fig_ej_planog2} contains a fundamental domain of the group $K$ that uniformizes the Belyi surface $S=\mathbb{D}/K$ associated to the uniform dessin $\mathcal{D}\subset X$  in Example \ref{ej:M8g2plano}. The identification of edges with the same label, that still determine the topology of the surface, is realized by certain edge-pairing transformations, which are materialized by suitable  isometries of the hyperbolic plane that generate a subgroup $K$ of index 16 in the Fuchsian triangle group $\Delta(2,4,8)$. The two pictures in  Figure \ref{fig_ej_planog2} are topologically equivalent to each other. The homeomorphism $\varphi_{\mathcal{D}}: X \longrightarrow S_{\mathcal{D}}$  restricts to an isomorphism of the bicoloured graphs at both sides of the figure.
 \end{example}
 
 The significance of dessin d'enfants rests on the following result:
 
 \begin{theorem*}[Belyi \cite{Belyi_1979} - Grothendieck \cite{Grothendieck_1997}]
 A Riemann surface is defined over a number field if and only if it is the Riemann surface associated to a dessin.
 \end{theorem*}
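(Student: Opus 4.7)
The plan is to prove the two implications separately: the direction ``dessin $\Rightarrow$ defined over a number field'' follows from a finite Galois-orbit argument, while the converse is Belyi's celebrated explicit construction.

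For the easy direction: by the construction recalled above, a dessin $\mathcal{D}$ of degree $E$ is encoded up to isomorphism by its monodromy pair $(\sigma_0,\sigma_1)\in\mathbb{S}_E\times\mathbb{S}_E$ modulo simultaneous conjugation, which is a finite combinatorial datum. By the Riemann existence theorem, every finite branched cover of $\widehat{\mathbb{C}}$ ramified only over $\{0,1,\infty\}$ arises in this way, so there are only finitely many isomorphism classes of Belyi pairs of any fixed degree. The absolute Galois group $\mathrm{Gal}(\overline{\mathbb{Q}}/\mathbb{Q})$ acts on these degree-$E$ Belyi pairs preserving combinatorial type, hence the orbit of $(S_{\mathcal{D}},\beta)$ is finite. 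A standard Weil-type descent argument (finite Galois orbit implies definability over a number field) then yields the conclusion.

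For Belyi's direction: given a Riemann surface $S$ defined over $\overline{\mathbb{Q}}$, I choose any non-constant rational function $f:S\to\widehat{\mathbb{C}}$ defined over $\overline{\mathbb{Q}}$, with critical value set $B_0\subset\overline{\mathbb{Q}}\cup\{\infty\}$. The strategy is to post-compose $f$ with a carefully chosen sequence of rational maps $\widehat{\mathbb{C}}\to\widehat{\mathbb{C}}$ so that the final critical value set becomes $\{0,1,\infty\}$. In the \emph{algebraic stage}, I replace $f$ by $p\circ f$, where $p\in\mathbb{Q}[x]$ is the product of the minimal polynomials over $\mathbb{Q}$ of the elements of $B_0\setminus(\mathbb{Q}\cup\{\infty\})$; the new critical values are contained in $p(B_0)$ together with the critical values of $p$, the latter being roots of $p'\in\mathbb{Q}[x]$ and hence of strictly smaller $\mathbb{Q}$-degree than the original irrational critical values. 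Iterating reduces the critical value set to $\mathbb{Q}\cup\{\infty\}$. In the \emph{combinatorial stage}, I use Belyi's polynomial trick: after a Möbius transformation positioning a troublesome critical value at $t=m/(m+n)\in\mathbb{Q}\cap(0,1)$ (with two others at $0$ and $1$), I post-compose with
\[
L_{m,n}(x)=\frac{(m+n)^{m+n}}{m^m n^n}\, x^m (1-x)^n,
\]
whose critical points $\{0,t,1,\infty\}$ all map into $\{0,1,\infty\}$. This strictly decreases by at least one the cardinality of the critical value set outside $\{0,1,\infty\}$, and iterating finitely often produces a Belyi map $\beta:S\to\widehat{\mathbb{C}}$. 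By the theory recalled in this section, $\beta^{-1}([0,1])\subset S$ is then a dessin whose Belyi pair is $(S,\beta)$.

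The main obstacle lies in verifying termination of both stages. Termination of the algebraic stage follows from a clean induction on the maximal $\mathbb{Q}$-degree of an irrational element of the critical value set. Termination of the combinatorial stage rests on Belyi's striking observation that the critical values of $L_{m,n}$ are exactly $\{0,1,\infty\}$, which ensures that each composition absorbs one rational critical value into this set without creating any new ones outside it. This is the only genuinely clever ingredient in the proof; everything else reduces to elementary algebraic manipulations together with Riemann's existence theorem.
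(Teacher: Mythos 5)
The paper offers no proof of this statement: it is quoted as the classical Belyi--Grothendieck theorem with references to Belyi and Grothendieck, so there is no in-paper argument to compare yours against, and I can only judge your sketch on its own terms. It follows the standard route (finiteness of the set of degree-$E$ Belyi pairs plus descent for one direction, Belyi's composition algorithm for the other), and the combinatorial stage with $L_{m,n}$ is handled correctly. However, the termination argument you give for the \emph{algebraic stage} is wrong as stated, and this is a genuine (if classical and repairable) gap.

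You claim that the new critical values introduced by post-composing with $p$ ``are roots of $p'\in\mathbb{Q}[x]$ and hence of strictly smaller $\mathbb{Q}$-degree than the original irrational critical values'', and you propose to induct on the maximal $\mathbb{Q}$-degree of an irrational critical value. There are two problems. First, the critical values of $p$ are not the roots of $p'$ but their images $p(\alpha)$ with $p'(\alpha)=0$. Second, and more seriously, such a $p(\alpha)$ lies in $\mathbb{Q}(\alpha)$, whose degree over $\mathbb{Q}$ is bounded only by $\deg p' = \deg p - 1$; when $B_0$ contains several irrational values in distinct Galois orbits, $\deg p$ is the \emph{sum} of the degrees of their minimal polynomials, so the new critical values can have $\mathbb{Q}$-degree strictly larger than the maximum degree of any original one. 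Your proposed monovariant can therefore increase, and the induction does not close. The correct monovariant is $\deg p_n$ itself: the set $\{p_n(\alpha) : p_n'(\alpha)=0\}$ is Galois-stable of cardinality at most $\deg p_n - 1$, hence is contained in the root set of some $p_{n+1}\in\mathbb{Q}[x]$ with $\deg p_{n+1}\le \deg p_n - 1$, and iterating drives this degree to $\le 1$, at which point all critical values are rational. Two smaller points to tidy up: in the ``dessin $\Rightarrow$ number field'' direction the group acting naturally on all Belyi pairs is $\mathrm{Aut}(\mathbb{C})$, not $\mathrm{Gal}(\overline{\mathbb{Q}}/\mathbb{Q})$ (the latter presupposes the pair is already defined over $\overline{\mathbb{Q}}$, which is what you are trying to prove); and the step ``finite orbit implies definable over a number field'' conceals the field-of-moduli versus field-of-definition issue, which for curves needs a genuine, though standard, supplementary argument beyond Weil's cocycle criterion.
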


\subsection{The Fuchsian representation of a uniform  dessin} \label{sec:hypuniclean}

\begin{lemma}
Let $\mathcal{D} \subset X$ be a uniform  dessin,  $(S, \beta)$ the associated Belyi pair and $K<\Delta(a,b,c)$ its Fuchsian group representation. Then:
\begin{enumerate}
\item $K$ is torsion free.
\item The edges of $\beta^{-1}([0,1]) = \varphi_{\mathcal{D}}(\mathcal{D}) \subset S$ are geodesic segments with respect to the hyperbolic metric in $S$. 
\end{enumerate}
\end{lemma}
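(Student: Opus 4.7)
The plan is to treat (1) as the main step and deduce (2) quickly from it. The structural input I will use is the standard fact that in any cocompact Fuchsian triangle group $\Delta(a,b,c)$ every nontrivial torsion element is conjugate to a nontrivial power of one of the three elliptic generators $x$, $y$, $z$; this follows from the observation that an elliptic element of $\Delta$ must fix a point of $\mathbb{D}$, and every elliptic fixed point of $\Delta$ lies in the $\Delta$-orbit of one of the three vertices $w_a$, $w_b$, $w_c$ of the fundamental triangle $T$. Consequently $K$ is torsion free if and only if
$$
K\cap g\langle x\rangle g^{-1}=K\cap g\langle y\rangle g^{-1}=K\cap g\langle z\rangle g^{-1}=\{1\}\qquad\text{for every } g\in\Delta(a,b,c).
$$

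To verify this I would fix $g$ and observe that $K\cap g\langle x\rangle g^{-1}$ is nothing but the $K$-stabilizer of the elliptic fixed point $g(w_a)\in\mathbb{D}$, while $g\langle x\rangle g^{-1}$ is the full $\Delta$-stabilizer, of order $a$. The index of the former inside the latter is, by definition, the ramification index of the covering $\beta:S\to\mathbb{D}/\Delta$ at the image point; by the construction of the Belyi pair recalled in the excerpt this index equals the degree of the white vertex of $\mathcal{D}$ corresponding to that $\Delta$-orbit (equivalently, the length of the cycle of $\sigma_0$ that indexes the vertex). The hypothesis that $\mathcal{D}$ is uniform forces this cycle length to equal $a$, so the intersection is trivial. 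Running the same argument with the triples $(y,w_b,b)$ and $(z,w_c,c)$ covers the other two branch points and completes the proof of (1).

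Part (2) is then immediate. Since $K$ is torsion free, it acts on $\mathbb{D}$ freely and properly discontinuously, so the quotient map $\mathbb{D}\to S=\mathbb{D}/K$ is a covering by local isometries with respect to the hyperbolic metrics on both sides. By the explicit construction of $\beta$ recalled above, each edge of $\beta^{-1}([0,1])\subset S$ lifts to a $\Delta$-translate of the hyperbolic segment $[w_a,w_b]\subset\mathbb{D}$, which is a side of $T$ and hence a geodesic arc; its image under a local isometry is therefore a geodesic segment in $S$.

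The only real input is the identification of local degrees of $\beta$ with vertex degrees of $\mathcal{D}$ (equivalently with the cycle lengths of $\sigma_0$, $\sigma_1$, $\sigma_\infty$), which I expect to be the main obstacle to writing the argument cleanly. However, this identification is already built into the description of the Fuchsian representation of a dessin given in the preceding section, so in practice it can simply be invoked once the torsion problem has been reduced to the triviality of the stabilizers above.
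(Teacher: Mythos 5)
Your proof is correct and follows essentially the same route as the paper: both reduce part (1) to the standard fact that every torsion element of $\Delta(a,b,c)$ is conjugate into $\langle x\rangle$, $\langle y\rangle$ or $\langle z\rangle$, and then use uniformity to show such an element cannot lie in $K$ --- the paper phrases this step via the permutations $\omega(t)$ (conjugates of nontrivial powers of $\sigma_0,\sigma_1,\sigma_\infty$ are fixed-point-free because all cycles have full length, so $t\notin\omega^{-1}(\mathrm{Stab}_M(1))$), while you phrase the same computation via stabilizers of elliptic fixed points and ramification indices. Part (2) is the identical local-isometry argument in both.
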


\begin{proof}
1) This is a well-known fact. The point is that all torsion elements $t\in \Delta(a,b,c)$ are powers of conjugates of the elliptic generators $x,y$ and $z$ and therefore the corresponding permutations $\omega(t)$ are conjugates of powers of $\sigma_0, \sigma_1$ and $\sigma_{\infty}$. But if $\mathcal{D}$ is uniform these permutations decompose as disjoint products of cycles of equal length, hence they will never fix an edge and so $t$ cannot lie in  $K=\omega^{-1}(\mathrm{Stab}_M(1))$.

2) The fundamental domain for $K$ decomposes as a union of  hyperbolic $2c$-gons with angles $2\pi / a$ and  $2\pi / b$ (alternating). Since by 1) the natural quotient map $\mathbb{D} \longrightarrow \mathbb{D}/K$ is in this case a local isometry, the sides of the $2c$-gons, which after the required identification in pairs become the counterpart of the edges of $\mathcal{D}$ in  $S=\mathbb{D}/K$, are certainly geodesic segments for the hyperbolic metric in $S$.
 \end{proof}

Recall for further use that a uniform dessin $\mathcal{D}$ of type $(a,b,c)$ is called \emph{regular} if the Belyi function $\beta$ induced by the inclusion of $K$ in $\Delta=\Delta(a,b,c)$ is a normal covering of the Riemann sphere $ \widehat{\mathbb{C}} \simeq \mathbb{D}/\Delta(a,b,c)$.  This is equivalent to saying that $K \lhd \Delta(a,b,c)$ and in this case the Belyi function agrees with the quotient map of $S=\mathbb{D} / K$ under the action of the group of isometries $\Delta/K$. The monodromy group of a regular dessin can be identified to $\Delta/K$ and so its order coincides with the degree of the dessin.


\section{Filling curves and dessins d'enfants}

Let $\gamma \subset X$ be a smooth closed curve (with respect to any differentiable structure on $X$) and let $p\in X$ be a self-intersection point. If $m_p$ is the maximal number of branches of $\gamma$ that meet at $p$ with the property that they are all mutually transversal, then one says that ${m_p}\choose{2}$ is the \emph{number of transversal} (tangent vector) \emph{pairs} of $\gamma$ at $p$. The total number of transversal pairs $\sum_p {{m_p}\choose{2}}$ is called the \emph{self-intersection number} of $\gamma$. A curve $\gamma$ of a closed oriented topological surface $X$ is said to be \emph{in minimal position} if its self-intersection number is minimal among all curves (freely) homotopic to $\gamma$. An equivalent condition is the absence of \emph{monogons} and \emph{bigons}, i.e. connected components of $X\smallsetminus \Gamma$ homeomorphic to discs such that the boundary is a a single arc connecting a given self-intersection point of $\Gamma$ to itself or two arcs connecting a same pair of self-intersection points, see \cite{Farb_Margalit_2012} and \cite{Hass_Scott_1985}. It is known that the geodesic representative (with respect to any conformal structure in $X$) of the homotopy class of $\gamma$ is in minimal position. 

We will say that a homotopically non-trivial curve $\gamma \subset X$ is  \emph{in general position} if it is in minimal position and
$m_p=2$ at every self-intersection point $p$. The self-intersection number of a curve $\gamma$ in general position agrees with the number of its self-intersection points. See \cite{Basmajian_2013}. 

These concepts readily extend to the case of a pair of homotopically nontrivial and distinct curves $\gamma_1$, $\gamma_2$. This allows us to speak about the self-intersection number of $\gamma_1, \gamma_2$ or whether they are in minimal or general position. We will say that a multicurve $\Gamma=\{ \gamma_1, \ldots , \gamma_r \}$ in $X$ is in minimal (resp. general) position if each individual curve $\gamma_i$ and each pair $\gamma_i, \gamma_j$ with $i \neq j$ is in minimal (resp. general) position.

 One says that $\gamma$ is a \emph{filling curve} if every representative of its homotopy class intersects every homotopically non-trivial loop on $X$, and this same condition defines the notion of \emph{filling multicurves}. Thus, a filling (multi)curve has complementary components homeomorphic to discs.

 \
 
 One can easily make a filling multicurve $\Gamma$ in minimal position into a bicoloured graph $\mathcal{D}_{\Gamma}$ simply by declaring  
 the set $\mathcal{B}$ of self-intersection points as the set of black vertices of $\mathcal{D}_{\Gamma}$, and choosing a middle point in each of the $d$ connected components
  of $\Gamma \smallsetminus \mathcal{B}$ to form the set of white vertices. We shall reserve the word \emph{arc} to refer to these connected components. There are therefore $d$ arcs in $\Gamma$ and  $E=2d$ edges in $\mathcal{D}_{\Gamma}$. In other words,  $\mathcal{D}_{\Gamma} \subset X$ is a dessin d'enfant of degree $2d$ and type $(2,2m,k)$, where $2m$ and $k$ are respectively the least common multiples of the degrees of black vertices and faces. Note that the degree of a black vertex $p$ being $2m_p$ means the number of transversal pairs at the self-intersection point $p$ is ${m_p}\choose{2}$. We observe 
that $\mathcal{D}_{\Gamma} $ is a  clean dessin by construction. We will be particularly interested in the case in which, in addition, $\mathcal{D}_{\Gamma} $ is  uniform. We note that this condition already forces $\Gamma$ to be in minimal position, for the occurrence of monogons or bigons would imply $k\le 2$ and so $(2,2m,k)$ would not be a hyperbolic type, that is, $X$ could not be of genus $g\ge 2$.
 
\

The following result shows that every uniform dessin $\mathcal{D}$ of type $(2,2m,k)$ arises in the way described above. 

\begin{proposition} \label{pr:destomulticurve}
Let  $\mathcal{D}\subset X$ be a clean  uniform dessin d'enfant of type $(2,2m,k)$ and genus $g\ge 2$ with permutation representation $\sigma_0, \sigma_1$. Let $\beta: S=\mathbb{D} / K \longrightarrow \mathbb{D}/ \Delta$ be the corresponding Belyi pair and $\varphi_{\mathcal{D}}: X \longrightarrow S$ the associated homeomorphism.  Then:
\begin{enumerate}
\item There is a filling multicurve $\Gamma'= \{\gamma'_1, \ldots, \gamma'_r \}\subset S$ such that $\varphi_{\mathcal{D}}(\mathcal{D})=\beta^{-1}([0,1])= \mathcal{D}_{\Gamma'} \subset S$. Moreover the curves $\gamma'_i$ are geodesics of $S$ and the number $r$ of components of $\Gamma'$ agrees with half the number of cycles of $\sigma_1^m \sigma_0$.
\item If $\mathcal{D}=\mathcal{D}_{\Gamma}$ for some filling multicurve  $  \Gamma =  \{\gamma_1, \ldots, \gamma_r \} \subset X$, then $\Gamma'=\varphi_{\mathcal{D}}(\Gamma)$ (as multicurves, that is $\gamma'_i= \varphi_{\mathcal{D}} ( \gamma_i ) $).
\end{enumerate}
\end{proposition}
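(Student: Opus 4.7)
\emph{Construction of $\Gamma'$ (Part 1).} By part (2) of the preceding lemma, the edges of $\beta^{-1}([0,1]) \subset S$ are hyperbolic geodesic segments, and the Fuchsian tessellation shows that at a white vertex of degree $a=2$ the two dessin-edges are separated by the angle $2\pi/a = \pi$, while at a black vertex of degree $b=2m$ consecutive dessin-edges are separated by $2\pi/b = \pi/m$. Concatenating the two edges at each white vertex therefore extends them into a genuine geodesic, and at each black vertex we pair edges that are $m$ positions apart in the cyclic order (their angle is $m\cdot\pi/m = \pi$) so that every such pair forms a geodesic continuation. Following these concatenations decomposes $\beta^{-1}([0,1])$ into disjoint closed geodesics $\gamma'_1, \dots, \gamma'_r$. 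Their point-set union equals the dessin, so the complement in $S$ is a disjoint union of open 2-cells; hence $\Gamma'=\{\gamma'_1,\dots,\gamma'_r\}$ is a filling multicurve, in minimal position (uniformity rules out monogons and bigons, as observed before the proposition), and by construction $\mathcal{D}_{\Gamma'} = \beta^{-1}([0,1]) = \varphi_\mathcal{D}(\mathcal{D})$.

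\emph{Counting components.} Fix an orientation on each $\gamma'_i$. In each arc, call the edge traversed from the black endpoint to the white midpoint its \emph{first half} and the other its \emph{second half}. If $e$ is a first half, applying $\sigma_0$ sends $e$ to the corresponding second half (going through the white vertex), and $\sigma_1^m$ then jumps from that second half to the opposite edge at the next black vertex, which is the first half of the next arc of $\gamma'_i$. Hence the first halves of an oriented $\gamma'_i$ form a single cycle of $\sigma_1^m\sigma_0$, and their $\sigma_0$-image (the second halves) forms a second cycle. These two cycles are disjoint: their union is the full edge set of $\gamma'_i$, and each has cardinality equal to the number of arcs of $\gamma'_i$, i.e., half of that union. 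Thus every unoriented $\gamma'_i$ accounts for exactly two cycles of $\sigma_1^m\sigma_0$, yielding $r = \tfrac{1}{2}\cdot\#\{\text{cycles of }\sigma_1^m\sigma_0\}$.

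\emph{Part 2.} Assume $\mathcal{D} = \mathcal{D}_\Gamma$ for some filling multicurve $\Gamma$. Since $\varphi_\mathcal{D}$ is an orientation-preserving homeomorphism that identifies the bicoloured graphs $\mathcal{D} \subset X$ and $\beta^{-1}([0,1]) \subset S$, the multicurves $\varphi_\mathcal{D}(\Gamma)$ and $\Gamma'$ share the same underlying graph and the same arcs; only the pairing of edges at each black vertex remains to be compared. At a self-intersection $p$ of $\Gamma$ with $m$ mutually transversal branches, the $2m$ tangent directions take the form $\{\theta_i, \theta_i+\pi\}_{i=1}^m$, and when listed in cyclic order around $p$ positions $k$ and $k+m$ always differ by $\pi$. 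Consequently two edge-ends at $p$ belong to the same branch of $\Gamma$ exactly when they are $m$ positions apart in the cyclic order, which is precisely the rule used in the construction of $\Gamma'$. This gives $\varphi_\mathcal{D}(\Gamma) = \Gamma'$. The most delicate point in the whole argument is the claim in the counting step that the two cycles of $\sigma_1^m\sigma_0$ corresponding to the two orientations of a single component cannot coincide, which we handle via the cardinality argument above.
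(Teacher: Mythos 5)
Your construction of $\Gamma'$, the identification of $\sigma_1^m\sigma_0$ as the ``next directed arc'' permutation, and the filling/minimal-position observations all follow essentially the same route as the paper, which likewise works with the Fuchsian $k$-gon decomposition, the angle $\pi$ at white vertices and the angle $m\cdot\pi/m=\pi$ obtained by jumping $m$ positions at a black vertex. Your Part 2 is, if anything, slightly more explicit than the paper's: where the paper appeals to minimal position to force the ``opposite edge'' continuation, you spell out that among the $2m$ antipodally paired transversal directions at a self-intersection point the antipode of a direction sits exactly $m$ positions away in the cyclic order. That part is fine.

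The step you yourself flag as the most delicate --- that the two cycles of $\sigma_1^m\sigma_0$ attached to a single component are disjoint --- is not actually established by your cardinality argument; that argument is circular. You assert that the set $F$ of first halves has cardinality equal to the number of arcs of $\gamma'_i$, but a priori the oriented traversal could visit some arc twice, once in each direction, in which case both halves of that arc would lie in $F$ and $|F|$ would exceed the number of arcs; the equality $|F|=\#\{\mbox{arcs of }\gamma'_i\}$ is precisely equivalent to the disjointness $F\cap\sigma_0(F)=\emptyset$ that you are trying to prove. Nor does pure combinatorics rule this out: since $\sigma_0(\sigma_1^m\sigma_0)\sigma_0^{-1}=(\sigma_1^m\sigma_0)^{-1}$ (using $\sigma_0^2=1$ and $\sigma_1^{2m}=1$), the involution $\sigma_0$ merely permutes the supports of the cycles of $\sigma_1^m\sigma_0$, and nothing algebraic prevents it from fixing one of them. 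What does rule it out is geometry: if both orientations of some arc occurred in the same cycle, the closed geodesic $\gamma'_i$ would traverse a geodesic segment in both directions; lifting to $\mathbb{D}$, this produces an element of $K$ preserving the axis of $\gamma'_i$ and reversing its direction, hence an elliptic involution, which is impossible because $K$ is torsion free by part (1) of the same lemma you quote. To be fair, the paper glosses over exactly this point with the word ``obviously''; but since you explicitly claim to handle it, the cardinality argument should be replaced by the torsion-freeness argument just sketched.
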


\begin{proof} 
As explained in Section \ref{sec:review_dessins}, we can consider a fundamental domain for $K$ in $\mathbb{D}$ given as a union of a certain number of hyperbolic $2k$-gons of angles $2\pi/2=\pi$ and $2\pi/2m$, that is to say regular $k$-gons of angle  $\pi/m$.
The number $n$ of polygons and the number $k$ are related to the degree $E$ of $\mathcal{D}$ by the relation $nk=E$. The edges of  $\varphi_{\mathcal{D}}(\mathcal{D})$ are in this case geodesic arcs in $S$.

Let $e_1$ and  $e_2$ be the pair of edges of $\varphi_{\mathcal{D}}(\mathcal{D})$ meeting (with angle $\pi$) at a given white vertex $w$, and let  $b_1$ and $b_2$ be the black vertices of $e_1$ and $e_2$ respectively (it may happen that $b_1=b_2$).  The union $e_1\cup e_2$ corresponds to  a full side $e$ of one of the regular $k$-gons that form the fundamental domain for $K$. Let $\gamma=\gamma^{e}$ be the full geodesic of $S$ that contains $e_1$ (and $e_2$). Clearly $(\sigma_0, \sigma_1)$ is also the permutation representation pair of $\varphi_{\mathcal{D}}(\mathcal{D})$. Obviously 
 $\sigma_0(e_1)=e_2$, and therefore $\sigma_1^m\sigma_0(e_1)$ is a geodesic segment meeting $e_2$ at $b_2$ with an angle $m \frac{\pi}{m}= \pi$. The sequence $e_1, e_2,  \sigma_1^m\sigma_0(e_1), \sigma_0(\sigma_1^m\sigma_0(e_1)), (\sigma_1^m\sigma_0)^2(e_1), \ldots$ describes the order we encounter the sides of the dessin $\varphi_{\mathcal{D}}(\mathcal{D})$ as we traverse the geodesic starting from the directed segment $e_1=[b_1,w]$. After a certain number of steps a full cycle $\tau_{e_1}$ of $\sigma_1^m \sigma_0$ is completed, and accordingly the geodesic path  returns to $e_1$, which shows that $\gamma$ is a closed geodesic. Obviously, companion to $\tau_{e_1}$ there is another cycle $\tau_{e_2}$ of same length containing the edge $e_2$. The union of all the edges of $\varphi_{\mathcal{D}}(\mathcal{D})$  that appear in $\tau_{e_1}$ and $\tau_{e_2}$ account for the whole closed geodesic $\gamma$.

If $\gamma^e$ covers $\varphi_{\mathcal{D}}(\mathcal{D})$ completely then $\varphi_{\mathcal{D}}(\mathcal{D}) =\mathcal{D}_{\Gamma'}$ for $\Gamma'=\{\gamma^e\}$. This happens when  $ \sigma_1^m\sigma_0$ decomposes as a product of two disjoint cycles of equal length (thus, of length $d=nk/2$). If not, $\gamma^e=\gamma'_1$ will  just be one of the components of a family $\Gamma'=\{ \gamma'_1, \ldots, \gamma'_r \}$ of closed curves that fills up $S$. The  second component can be described in a similar way starting from an edge that does not belong to $\gamma'_1$, and so on. The number $r$ of components of this family $\Gamma'$  is clearly half the number of disjoint cycles of $ \sigma_1^m\sigma_0$.

Finally, let us assume that $\mathcal{D}=\mathcal{D}_{\Gamma}$ for some filling multicurve $\Gamma \subset X$. We must show that in this case the way we have traveled the graph $\beta^{-1}([0,1])$ is the same as the way the multicurve $\Gamma'= \varphi_{\mathcal{D}}(\Gamma)$ is traversed (or the opposite). To see this, we only notice that in this situation when in the discussion obove we arrive at the endpoint of $e_2$, that is, at the transversal self-intersection point $b_2$, the fact that $\varphi_{\mathcal{D}}(\Gamma)$ is in minimal position implies that the only possible way to continue our walk is by taking the opposite side of $\varphi_{\mathcal{D}} (\mathcal{D}_{\Gamma} )$, that is, the side $\sigma_1^m(e_2)=\sigma_1^m \sigma_0(e_1)$, and this is exactly the choice we made.
\end{proof}

From the above proof we can extract the following consequence:

\begin{corollary}
A clean  uniform dessin of type $(2,2m,k)$ with monodromy pair $(\sigma_0, \sigma_1)$ determines a filling curve if and only if $\sigma_1^m \sigma_0$ decomposes as a product of two disjoint cycles of the same length.
\end{corollary}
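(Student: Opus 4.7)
The plan is to deduce this as an immediate consequence of the companion-cycle analysis contained in the proof of Proposition \ref{pr:destomulticurve}. Recall that ``$\mathcal{D}$ determines a filling curve'' is shorthand for saying that the multicurve $\Gamma'=\varphi_{\mathcal{D}}^{-1}(\beta^{-1}([0,1]))$ produced by the proposition has exactly one component, i.e.\ $r=1$.

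For the forward direction I would simply quote the last assertion of item 1 of Proposition \ref{pr:destomulticurve}: the number $r$ of components of $\Gamma'$ equals half the number of disjoint cycles of $\sigma_1^m\sigma_0$. Hence $r=1$ is equivalent to $\sigma_1^m\sigma_0$ having exactly two disjoint cycles. To upgrade ``two cycles'' to ``two cycles of the same length,'' I would invoke the companion observation made during the proof of the proposition: for any edge $e_1$ at a white vertex $w$, the cycle $\tau_{e_1}$ of $\sigma_1^m\sigma_0$ containing $e_1$ is paired with a cycle $\tau_{e_2}$ of the same length containing the other edge $e_2$ meeting $w$, the two representing the two orientations in which the same closed geodesic is traversed. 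Therefore, when only two cycles appear, they must be companions of one another and, in particular, of the same length.

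For the converse direction, assuming $\sigma_1^m\sigma_0$ decomposes into two disjoint cycles (the equal-length hypothesis being, in fact, automatic as just noted), Proposition \ref{pr:destomulticurve} again gives $r = 2/2 = 1$, so the uniform dessin determines a single filling curve.

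There is essentially no obstacle: the whole content has already been worked out while proving Proposition \ref{pr:destomulticurve}. The only point worth emphasising in the write-up is that the ``equal length'' condition in the statement is not an additional constraint but rather reflects the structural pairing of cycles of $\sigma_1^m\sigma_0$ that correspond to the two orientations of a single closed geodesic.
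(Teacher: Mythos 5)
Your proposal is correct and follows exactly the route the paper itself takes: the corollary is extracted from the proof of Proposition \ref{pr:destomulticurve}, using the count $r=\frac{1}{2}(\text{number of disjoint cycles of }\sigma_1^m\sigma_0)$ together with the companion-cycle pairing $\tau_{e_1},\tau_{e_2}$ to see that two cycles force equal lengths. Nothing further is needed.
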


We warn the reader that, by abuse of notation, in the sequel sometimes we will not distinguish the multicurve $\Gamma$ from the associated dessin $\mathcal{D}_{\Gamma}$, and so we will denote the homeomorphism $\varphi_{\mathcal{D}_{\Gamma}}: X \longrightarrow S_{\mathcal{D}_{\Gamma}}$ simply by $\varphi_{\Gamma}: X \longrightarrow S_{\Gamma}$.

\begin{example} \label{ex:dessinexample} \rm 
The filling curve $\gamma$ in the left hand side of Figure \ref{fig_g2a} is in minimal position. The four self-intersection points are going to account for the set $\mathcal{B}$ of black vertices of the associated dessin. The complement $\gamma \smallsetminus \mathcal{B}$ consists of eight (directed) arcs to which we give a label $j$ (from 1 to 8) as the numbered arrows indicate. Placing a white vertex $w_j$ in the middle splits each arc $j$ into two directed arcs: the first one ending at $w_j$, which we still label $j$, and the second one emanating from $w_j$, to which we give the complementary label  label $17-j$. In this way the monodromy representation of our dessin $\mathcal{D}_{\gamma}$ is given by the permutations 
$$
\begin{array}{c}
\sigma_0 =(1,16)(2,15)(3,14)(4,13)(5,12)(6,11)(7,10)(8,9)\\
  \sigma_1=(1,6,9,12)(2,10,16,8)(3,13,15,5)(4,7,14,11)
 \end{array}
$$ 

Thus, $\mathcal{D}_{\gamma}$ is nothing but the uniform dessin of type $(2,4,8)$ we introduced in Example \ref{ej:M8g2plano}. Notice that in this case $\sigma_1^2\sigma_0=(1,2,3,4,5,6,7,8)(9,10,11,12,13,14,15,16)$ decomposes as a product of just two cycles, as expected for the case of a filling curve (rather than a multicurve with $r>1$).
\end{example} 

\section{Explicit minima of the geodesic length function} \label{sec_main}

We see in the proof of Proposition \ref{pr:destomulticurve} that the length of the multicurve $\Gamma' \subset S $ associated to a clean uniform dessin d'enfant of genus $g\ge 2$ depends solely on the type of the dessin and not on its homotopy class or the number of its components.
This sum equals  $d=nk/2$ times the length of the side of a regular hyperbolic $k$-gon of angle $\pi/m$.  Elementary hyperbolic geometry  shows that this sum of lengths equals
\begin{equation} \label{eq:l_2m,k,d}
\ell_{2m,k,d}= d \cdot  \cosh^{-1} \left(\displaystyle\frac{\cos^2\frac{\pi}{2m} + \cos \frac{2\pi}{k}}{\sin^2\frac{\pi}{2m}} \right)
\end{equation}
which in terms of the hyperbolic arc $L_{m,k}$ introduced in Section \ref{subsec:intro} can be rewritten as 
$$2d \cdot (\mbox{length of }L_{m,k})=2d \cdot  \cosh^{-1} \left(\displaystyle\frac{\cos \frac{\pi}{k}}{\sin\frac{\pi}{2m}} \right).
$$

The main result of this paper is the following:

\begin{theorem} \label{th:main}
Let $X$ be a closed oriented topological surface of genus $g\ge2$, and let $\Gamma \subset X$ be a filling multicurve. 
Assume that the underlying dessin d'enfant  $\mathcal{D}_{\Gamma}$ is uniform of type $(2,2m,k)$. Then the geodesic length function $\ell_{\Gamma}$ reaches its minimum precisely at the Grothendieck Teichm\"uller point $[\varphi_{\Gamma}, S_{\Gamma}] \in T(X)$.
 Moreover, if we denote by $\beta: S_{\Gamma} \to \widehat{\mathbb{C}}$ the corresponding Belyi function, the geodesic multicurve in $S_{\Gamma}$ representing $\varphi_{\Gamma}(\Gamma)$ has $\beta^{-1}([0,1])$ as underlying graph and its length equals  the quantity 
$\ell_{2m,k,d}$ displayed above, where $d$ is the number of arcs of $\Gamma$.
\end{theorem}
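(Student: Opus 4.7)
The plan is to combine Proposition \ref{pr:destomulticurve}, Kerckhoff's uniqueness of the minimum of $\ell_\Gamma$, and a criticality check at the Grothendieck Teichmüller point.

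\emph{Identification and length at $[\varphi_\Gamma, S_\Gamma]$.} Proposition \ref{pr:destomulticurve} yields at once that at the point $[\varphi_\Gamma, S_\Gamma]\in T(X)$ the image $\varphi_\Gamma(\Gamma)=\beta^{-1}([0,1])$ is a closed geodesic multicurve of $S_\Gamma$, and that each arc of $\Gamma$ is mapped by $\varphi_\Gamma$ onto a side of one of the $n=2d/k$ regular hyperbolic $k$-gons of angle $\pi/m$ that tile a fundamental domain for $K$. A direct hyperbolic-trigonometry computation on such a regular $k$-gon (for instance, bisecting it into $2k$ right triangles sharing the center) gives the common side length, and summing over the $d$ arcs of $\Gamma$ yields $\ell_\Gamma([\varphi_\Gamma, S_\Gamma])=\ell_{2m,k,d}$ as in formula \eqref{eq:l_2m,k,d}.

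\emph{Reduction to criticality.} Since $\Gamma$ is filling, Kerckhoff's theorem \cite{Kerckhoff_1983} and Wolpert's strict convexity of $\ell_\Gamma$ along earthquake paths \cite{Wolpert_1987} (both recalled in the introduction) guarantee that $\ell_\Gamma$ has a unique minimum on $T_g$; in particular every critical point of $\ell_\Gamma$ coincides with that unique minimum. Hence the theorem reduces to the assertion that the Grothendieck Teichmüller point $[\varphi_\Gamma, S_\Gamma]$ is critical for $\ell_\Gamma$.

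\emph{Criticality via symmetry.} The key property is the maximal local symmetry of $\varphi_\Gamma(\Gamma)$ on $S_\Gamma$: at every self-intersection the $m$ geodesic branches meet with all consecutive angles equal to $\pi/m$, dividing a small neighborhood into $2m$ congruent sectors. I see two natural routes to exploit this. Analytically, one applies Wolpert's first variation formula to each component, sums the line currents of the form $(dz)^2|_{\gamma_i}$, and argues that the squared unit tangents of the $m$ branches at each self-intersection, being proportional to $m$-th roots of unity, sum to zero; combined with the uniform $k$-gon structure along the arcs, this should force the resulting cotangent vector in $T_{[\varphi_\Gamma, S_\Gamma]}T_g$ to vanish. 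Group-theoretically, one identifies a finite subgroup $G\leq \mathrm{Mod}_g$ that stabilizes $[\Gamma]$ and is realized by automorphisms of $S_\Gamma$ coming from $N_{\Delta(2,2m,k)}(K)/K$; since $G$ fixes $[\varphi_\Gamma, S_\Gamma]$ and $\ell_\Gamma$ is $G$-invariant, a rigidity argument on the $G$-fixed locus in $T_g$ should force this fixed point to coincide with the unique minimum.

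\emph{Main obstacle.} The hardest part is this last step. The analytic route requires upgrading the pointwise cancellation at the self-intersections to a global cohomological vanishing of the current $\sum_i (dz)^2|_{\gamma_i}$, which is delicate because the variation integrals live along the full geodesics, not only near the self-intersection points. The group-theoretic route requires $G$ to be rich enough that the $G$-invariant directions in $T_g$ are controllable, and this depends sensitively on the specific dessin $\mathcal{D}_\Gamma$. In both cases, Proposition \ref{pr:destomulticurve} together with the Kerckhoff-Wolpert uniqueness of the minimum reduce the proof to carrying out either of the two routes with care.
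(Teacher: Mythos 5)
Your reduction to criticality via the Kerckhoff--Wolpert uniqueness, and your computation of the length at $[\varphi_\Gamma, S_\Gamma]$, both match the paper's setup. But the heart of the theorem is exactly the step you leave open, and the paper closes it with two ideas that are absent from your proposal. First, in the case where $\mathcal{D}_\Gamma$ is \emph{regular}, the deck group $H=\Delta(2,2m,k)/K$ acts on $S_\Gamma$ by isometries preserving $\Gamma'=\beta^{-1}([0,1])$ setwise, so $\ell_{\Gamma'}([f,S])=\ell_{\Gamma'}([f\circ h,S])$ for all $h\in H$; by uniqueness the minimizer lies in $\mathrm{Fix}(H)\subset T_g$. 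The point you are missing is that no ``rigidity argument on the $G$-fixed locus'' with delicate control of invariant directions is needed: $\mathrm{Fix}(H)$ is canonically identified with the Teichm\"uller space of the quotient orbifold $S_\Gamma/H=(\widehat{\mathbb{C}},\{0,1,\infty\})$, i.e.\ with $T_{0,3}$, which is a \emph{single point}. Since $[\mathrm{Id},S_\Gamma]$ also lies in $\mathrm{Fix}(H)$, the minimizer must equal it. Your worry that the group-theoretic route ``depends sensitively on the specific dessin'' is legitimate but is resolved by this orbifold identification rather than by any case analysis.

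Second, and more seriously, for a merely uniform (non-regular) dessin the group $N_{\Delta}(K)/K$ you propose to use may well be trivial, so the symmetry argument cannot be run on $S_\Gamma$ itself. The paper's fix is to pass to the core subgroup $\widetilde{K}=\bigcap_{\gamma\in\Delta}\gamma K\gamma^{-1}\lhd\Delta$, apply the regular case to $\widetilde{S_\Gamma}=\mathbb{D}/\widetilde{K}$ with $\widetilde{\Gamma}=p^{-1}(\Gamma')$, and then transfer the conclusion down the unramified covering $p:\widetilde{S_\Gamma}\to S_\Gamma$ using the identity $\ell_{\widetilde{\Gamma}}([\widetilde{f},\widetilde{S}])=\deg(p)\,\ell_{\Gamma'}([f,S])$ for lifts $\widetilde{f}$ of markings $f$: a competitor $[f,S]$ beating $[\mathrm{Id},S_\Gamma]$ would lift to a competitor beating $[\mathrm{Id},\widetilde{S_\Gamma}]$, a contradiction. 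Your analytic route via Wolpert's first variation and cancellation of $m$-th roots of unity at the self-intersections is not what the paper does, and as you yourself note it requires a global vanishing statement that the local angle symmetry alone does not supply; as written it does not constitute a proof.
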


\begin{proof}
Thorough the identification $T(X)\simeq T(S_{\Gamma})$ induced by the homeomorphism $\varphi_{\Gamma}: X \longrightarrow S_{\Gamma}$ (given by $[f, Y] \longmapsto [f \circ \varphi_{\Gamma}^{-1}, Y]$), the point $[\varphi_{\Gamma}, S_{\Gamma}]\in T(X) $ corresponds to the point $[\mathrm{Id}, S_{\Gamma}] \in T(S_{\Gamma})$. Thus, what we need to prove is that, if we set $\varphi_{\Gamma}(\Gamma)=\Gamma'$, 
the function $\ell_{\Gamma'}: T(S_{\Gamma}) \longrightarrow \mathbb{R}$ reaches its minimum at the point $[\mathrm{Id}, S_{\Gamma}]\in T(S_{\Gamma}) $.

\

As a first step, let us assume that $\mathcal{D}_{\Gamma}$ is a regular dessin. Then the Belyi pair $(S_{\Gamma}, \beta)$ is equivalent to the quotient map 
$$
\beta: S_{\Gamma}=\mathbb{D}/K \longrightarrow S_{\Gamma}/H=\mathbb{D}/\Delta(2,2m,k)=\widehat{\mathbb{C}}
$$
where $H<\mathrm{Isom}^+(S_{\Gamma})$ is isomorphic to the factor group   $\Delta(2,2m,k)/ K$. 

\

Since setwise $\Gamma'$ agrees with $\beta^{-1}([0,1])$ one has 
$$
h (\Gamma') = \Gamma' \quad \mbox{ for every } h \in H
$$
which obviously implies the following identity of geodesic length functions:
$$
\ell_{\Gamma'}= \ell_{h(\Gamma')}: T_g = T(S_{\Gamma}) \longrightarrow \mathbb{R}^+, \quad \mbox{ for every } h \in H
$$

Thus, for every $[f,S]\in T_g$ and for every  $h \in H$ the following identity holds:
$$
\ell_{\Gamma'}([f,S])= \ell_{h(\Gamma')}([f,S])= \ell_{\Gamma'}([ f \circ h, S]).
$$

Now, suppose that $[f,S]\in T_g$ reaches the minimum of the function $\ell_{\Gamma'}$. Then, since this minimum is unique, we must have
$$
[f,S]=[f \circ h, S] \quad \forall h \in H.
$$

Viewing $H$ as a subgroup of $\mathrm{Mod}_g=\mathrm{Mod}(S_{\Gamma})$ this means that $[f,S]\in \mathrm{Fix}(H)$, the fixed point set of $H$ in $T_g$. Obviously, $[ \mathrm{Id}, S_{\Gamma}]\in \mathrm{Fix}(H)$ too and, since $\mathrm{Fix}(H)$ can be identified to the Teichm\"uller space $T_{0,3}$ of the orbifold $S/H=(\widehat{\mathbb{C}}, \{0, 1, \infty\})$ which consists of a single point (see e.g. \cite{Gonzalez-Diez_Harvey_1992, Harvey_1971, Kravetz_1959}), we conclude that $[f,S]=[\mathrm{Id}, S_{\Gamma}]$. As the expression for the length of $\beta^{-1}[0,1]$  in $S_{\Gamma}$ was given in (\ref{eq:l_2m,k,d}), we are done.

\

Suppose now that $\mathcal{D}_{\Gamma}$ is not regular but merely uniform. If $\widetilde{\beta}: \widetilde{S_{\Gamma}} \longrightarrow \mathbb{C}$ is the normalization of the Belyi morphism $\beta: S_{\Gamma} \longrightarrow  \mathbb{C}$ we have a diagram as follows
$$
\xymatrix{
\widetilde{S_{\Gamma}} = \mathbb{D}/\widetilde{K}  \ar[r]^{p}    \ar[dr]^{\widetilde{\beta}} & 
S_{\Gamma}=\mathbb{D}/K \ar[d]_{\beta}  \\
& \widetilde{S_{\Gamma}}/\widetilde{H}=\widehat{\mathbb{C}} =\mathbb{D}/\Delta(2,2m,k) 
}
$$
where $ \widetilde{K}$ is the core subgroup of the inclusion $K<\Delta$, defined as 
$$
\widetilde{K}= \displaystyle\bigcap_{\gamma \in \Delta(2,2m,k)}\gamma K \gamma^{-1}\vartriangleleft \Delta(2,2m,k),
$$ 
and  $\widetilde{H} = \Delta(2,2m,k)/ \widetilde{K}$.  Note that $\widetilde{S_{\Gamma}} = S_{\widetilde{\Gamma}}$, where  $\widetilde{\Gamma}$ is the dessin corresponding to the regular Belyi pair $(\widetilde{S_{\Gamma}} , \widetilde{\beta})$, that is $\widetilde{\Gamma}=\widetilde{\beta}^{-1}([0,1])=(\beta \circ p)^{-1}([0,1])=p^{-1}(\Gamma')$.

\

As $\widetilde{\beta}$ is a regular covering, we already know that the minimum of $\ell_{\widetilde{\Gamma}}$ is $\ell_{\widetilde{\Gamma}}([\mathrm{Id}, S_{\widetilde{\Gamma}}])= \mathrm{deg}(p)\ell_{\Gamma'}([\mathrm{Id}, S_{\Gamma} ])$. We claim that from this fact it follows that the minimum of $\ell_{\Gamma'}$ must be $\ell_{\Gamma'} ([\mathrm{Id},  S_{ \Gamma }])$.

\

Indeed, if the minimum of $\ell_{\Gamma'}$ were attained at $[f,S] \neq [\mbox{Id}, S_{\Gamma}]$ then by considering a lift $\widetilde{f}: \widetilde{S_{\Gamma}} \longrightarrow \widetilde{S}$ of $f: S_{\Gamma} \longrightarrow S$ we would have 
$$
\ell_{\widetilde{\Gamma}}([\mathrm{Id}, S_{\widetilde{\Gamma}}]) = \mathrm{deg}(p) \ell_{\Gamma'}([\mathrm{Id}, S_{\Gamma}])>\mathrm{deg}(p)\ell_{\Gamma'}([f,S])= \ell_{\widetilde{\Gamma}}([ \widetilde{f}, \widetilde{S}]),
$$
a contradiction.
\end{proof}

Theorem \ref{th:main} identifies the Riemann surface at which the geodesic length function $\ell_{\Gamma}$ attains its minimum for all filling multicurves that admit a homotopy equivalent representative with the property that all its self-intersection points have the same degree $2m$ (or, equivalently, the same self-intersection number  ${m}\choose{2}$) and all the faces of its complement have the same number of edges. Only for those with $m=2$ the minimizing geodesic is in general position. For instance, for the curve in Figure \ref{fig_y2=x6-1} this is not the case.

\

A combination of Theorem \ref{th:main} and Proposition \ref{pr:destomulticurve} gives the following:

\begin{theorem} \label{th:comb}
Any genus $g$ surface subgroup $K<\Delta(2,2m,k)$ uniformizes a Riemann surface $S=\mathbb{D}/K$ which realizes the minimum of the geodesic length function $\ell_{\Gamma}$ of some filling multicurve $\Gamma$. In fact, if $\beta: \mathbb{D}/K \longrightarrow \mathbb{D} / \Delta = \widehat{\mathbb{C}}$ denotes the projection induced by the inclusion $K<\Delta$, 
 the multicurve $\Gamma$ can be taken to be $\beta^{-1}([0,1])\subset S$ and $(S, \beta)$ will be the Belyi pair associated to the dessin $\mathcal{D}_{\Gamma}$.

Moreover, if $\sigma_0$, $\sigma_1$ are the two permutations representing $\mathcal{D}_{\Gamma}$, the number $r$ of components of $\Gamma$ coincides with half of the number of disjoint cycles of $\sigma_1^m \sigma_0$.
\end{theorem}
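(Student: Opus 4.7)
The plan is to combine Proposition \ref{pr:destomulticurve} (which produces a filling multicurve from a clean uniform dessin of type $(2,2m,k)$) with Theorem \ref{th:main} (which locates the minimum of the associated length function). The only substantive input needed beyond these two results is to verify that the Belyi pair built from $K<\Delta(2,2m,k)$ automatically yields a clean uniform dessin of \emph{exactly} that type; the rest of the argument is essentially tautological chasing of definitions.

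First I would let $\mathcal{D}\subset S=\mathbb{D}/K$ be the dessin associated to the Belyi function $\beta:S\to \mathbb{D}/\Delta\simeq\widehat{\mathbb{C}}$ determined by the inclusion $K<\Delta=\Delta(2,2m,k)$, and let $(\sigma_0,\sigma_1)$ be its permutation representation. The key observation is that torsion-freeness of $K$ forces every cycle of $\sigma_0$ to have length $2$, every cycle of $\sigma_1$ to have length $2m$, and every cycle of $\sigma_\infty=(\sigma_0\sigma_1)^{-1}$ to have length $k$. Indeed, a shorter cycle would mean that a proper non-trivial power of one of the elliptic generators $x,y,z$ of $\Delta$ lies in the stabiliser of an edge, i.e.\ inside a conjugate of $K$, producing a non-trivial torsion element there. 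Consequently $\mathcal{D}$ is a clean uniform dessin of type exactly $(2,2m,k)$.

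Having established uniformity, I would apply Proposition \ref{pr:destomulticurve}(1) to $\mathcal{D}$: it provides a filling (geodesic) multicurve $\Gamma\subset S$ with $\mathcal{D}_\Gamma=\beta^{-1}([0,1])$, and it identifies the number $r$ of components of $\Gamma$ with half the number of disjoint cycles of $\sigma_1^m\sigma_0$. This already yields the ``moreover'' clause. By construction the Belyi pair attached to the dessin $\mathcal{D}_\Gamma$ coincides with the pair $(S,\beta)$ we started from, so $S_\Gamma=S$ and the homeomorphism $\varphi_\Gamma$ is (isotopic to) the identity; equivalently $[\varphi_\Gamma, S_\Gamma]=[\mathrm{Id},S]\in T(S)$.

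Finally, viewing $S$ as the topological surface $X$ and $\Gamma$ as a filling multicurve on it whose underlying dessin is uniform of type $(2,2m,k)$, Theorem \ref{th:main} asserts that $\ell_\Gamma$ attains its minimum precisely at the Grothendieck--Teichm\"uller point $[\varphi_\Gamma,S_\Gamma]=[\mathrm{Id},S]$, which is exactly the conclusion sought. The only place where real work happens is the type verification in the first step, and this is where the surface-subgroup (torsion-free) hypothesis on $K$ is used essentially; every other step is a direct invocation of an earlier result.
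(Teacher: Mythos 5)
Your proof is correct and follows essentially the same route as the paper, which derives this theorem directly as ``a combination of Theorem \ref{th:main} and Proposition \ref{pr:destomulticurve}''. The one step you spell out that the paper leaves implicit --- that torsion-freeness of the surface subgroup $K$ forces the cycles of $\sigma_0$, $\sigma_1$, $\sigma_\infty$ to have full lengths $2$, $2m$, $k$, so the associated dessin is clean and uniform of exactly that type --- is exactly the right justification for why those two results apply.
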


\begin{example} \rm
From what has been worked out in the previous examples, we know that for the filling curve $\gamma$ at the left hand side of Figure \ref{fig_g2a}, the minimum of $\ell_{\gamma}$ is attained at the compact Riemann surface $S$ uniformized by the surface subgroup $K$ of the triangle group $\Delta(2,4,8)=\langle x, y, z \ | \ x^2=y^4=z^8=xyz=1 \rangle $ generated by the side pairing transformations that identify edges with equal labels in the fundamental domain on the right hand side of Figure \ref{fig_ej_planog2}. It is fairly simple to describe these side-pairing transformations in terms of the generators $x,y,z$ of $\Delta(2,4,8)$ taken as hyperbolic rotations of respective positive angles $\pi, \pi/2 $ and $\pi/4$ fixing the white point of edge 1, the black point of edge 1 and the center of the face on the right.  This is done in Table \ref{ta:gen}.

\

\begin{table}[!htbp]
\centering{ 
\begin{tabular}{|c|c|c|c|c|c|c|c|}
\hline 
Sides  & 2,15 & 3,14 & 4,13 & 5,12 & 6,11 & 7,10 & 8,9  \\ \hline
Word in $x,y,z$ &  $xz^5xz^6$ &   $xz^3xz^3$ & $xz^6xz^6x$ &   $z^6xz^7$ &    $xz^7xz^4$ &    $xz^4xz^7x$  &  $z^3xz$   \\ \hline
\end{tabular}}  \caption{Generators of the group $K$ described on the right hand side of Figure \ref{fig_ej_planog2} in terms of the generators $x,y,z$ of $\Delta(2,4,8)$.}  \label{ta:gen}
\end{table}

 The reader can also easily check that the elements of $\Delta(2,4,8)$ that occur in Table \ref{ta:gen} 
 do indeed map into  the stabilizer of $1\in \{1, \ldots , 16\}$ under the monodromy homomorphism  $\omega: \Delta(2,4,8) \longrightarrow \mathbb{S}_{16}$ in agreement with the fact, explained in Section \ref{sec:review_dessins}, that $K=\omega^{-1}(\mathrm{Stab}_M(1))$.
 
 We notice that this filling curve $\gamma$ which we are analyzing through Examples \ref{ej:M8g2plano} to \ref{ex:dessinexample}, agrees with the curve $\gamma_0$ drawn in Figure 6 of  \cite{Leininger_2003}, as can be seen by checking that the permutations $\sigma_0, \sigma_1$ representing the associated dessin $\mathcal{D}_{\gamma_0}$ are conjugate (or, in fact, agree for a suitable choice of the labels of the edges) to the permutation representation of $\mathcal{D}_{\gamma}$ given in Example \ref{ex:dessinexample}. Incidentally, it can be shown that there are another 18 uniform multicurves of genus 2 and type $(2,4,8)$, three of which are proper filling curves as the one we have considered here.
\end{example}

\begin{example} \label{ex:(2,4,12)gen2} \rm
It is known that there are exactly six uniform dessins of genus $g=2$ and type $(2,4,12)$ (cf \cite{Girondo_Torres_2011}). From the point of view of filling curves this means that, up to homotopy
equivalence, in genus $2$ there are exactly 6 filling multicurves with self-intersection number $3$ (as any representative with 3 self-intersection points in general position must have only one face, hence  corresponds to a dessin of type $(2,4,12)$). 
Figure   \ref{fig_2_4_12_gen2} shows a topological picture of these multicurves $\Gamma_i$ together with the hyperbolic Riemann surfaces (described as a hyperbolic 12-gon with the indicated side pairing) at which the  geodesic length functions $\ell_{\Gamma_i}$ reach their minimum.

 \begin{figure}[!htbp]
\centering{
\begin{tabular}{|c|c|} \hline
\begin{tabular}{c} \includegraphics[width=0.32\textwidth]{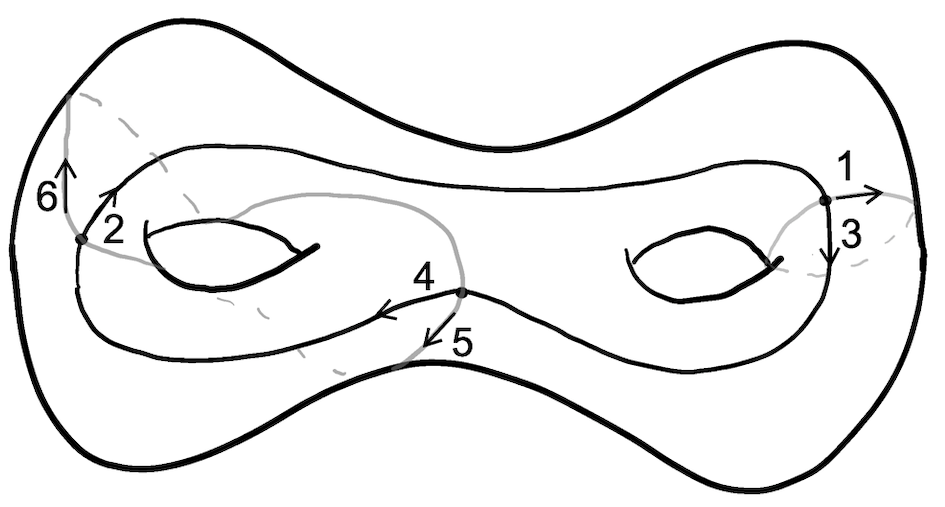} \end{tabular} &  \begin{tabular}{c} \includegraphics[width=0.32\textwidth]{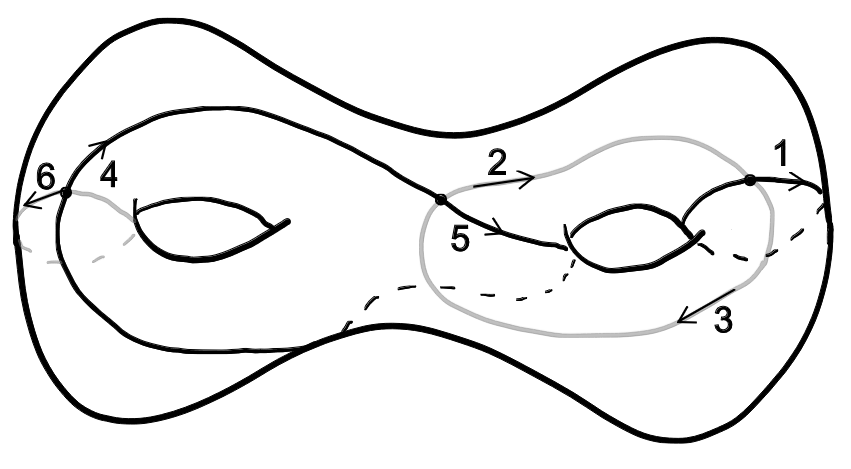} \end{tabular} \\
\begin{tabular}{c} \includegraphics[width=0.20\textwidth]{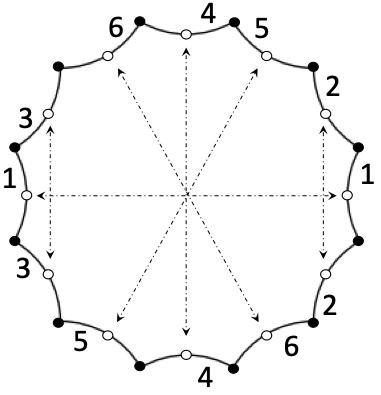} \end{tabular}  & \begin{tabular}{c} \includegraphics[width=0.20\textwidth]{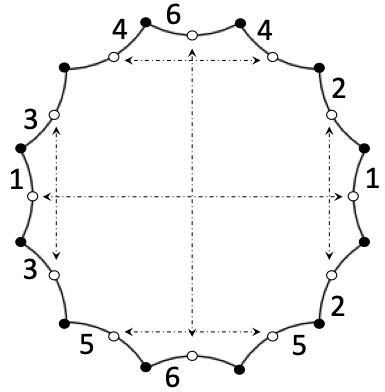} \end{tabular} \\ \hline 
\begin{tabular}{c} \includegraphics[width=0.32\textwidth]{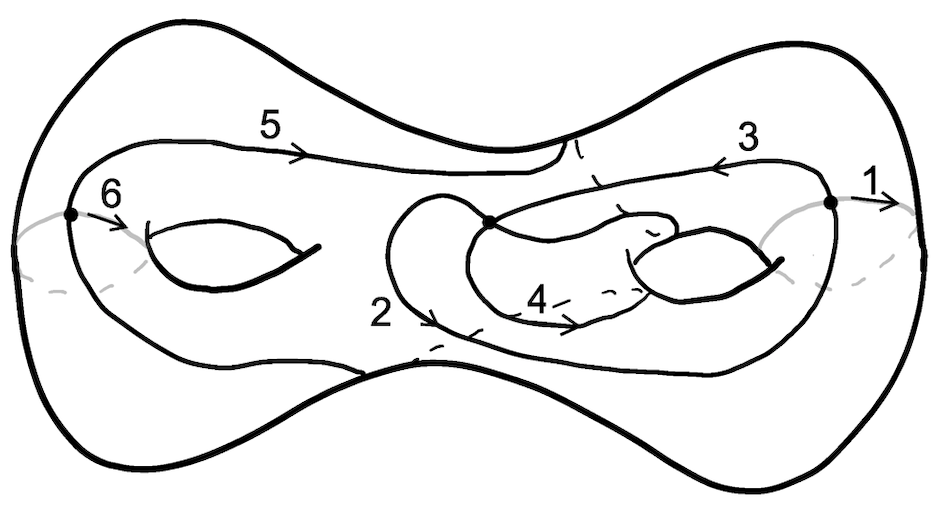} \end{tabular} & \begin{tabular}{c} \includegraphics[width=0.32\textwidth]{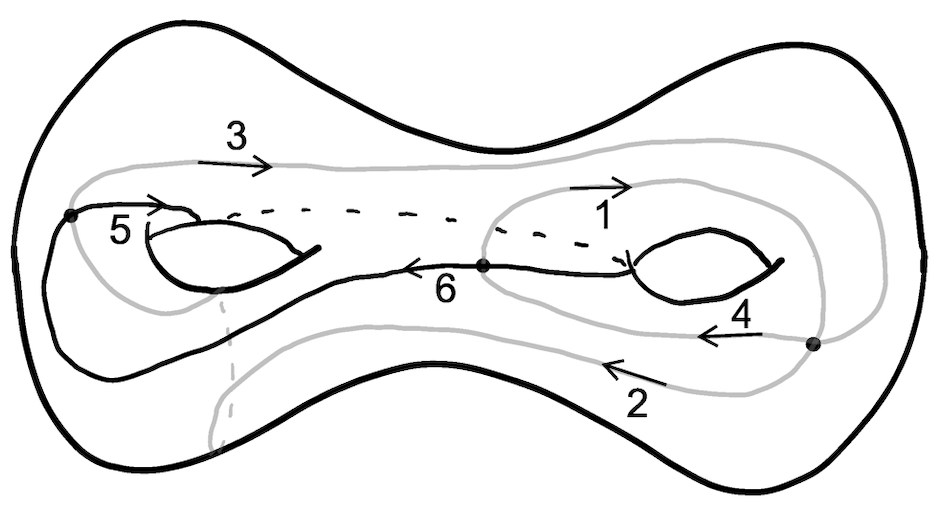} \end{tabular} \\
\begin{tabular}{c} \includegraphics[width=0.20\textwidth]{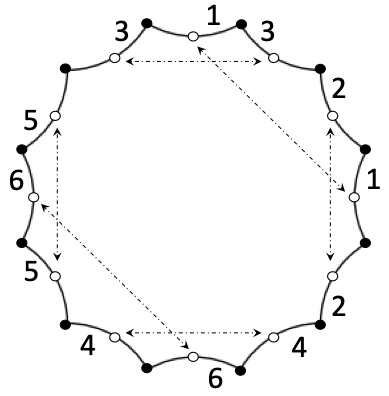} \end{tabular} & \begin{tabular}{c} \includegraphics[width=0.20\textwidth]{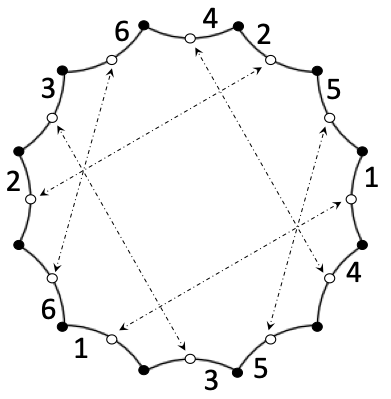} \end{tabular} \\ \hline
\begin{tabular}{c} \includegraphics[width=0.32\textwidth]{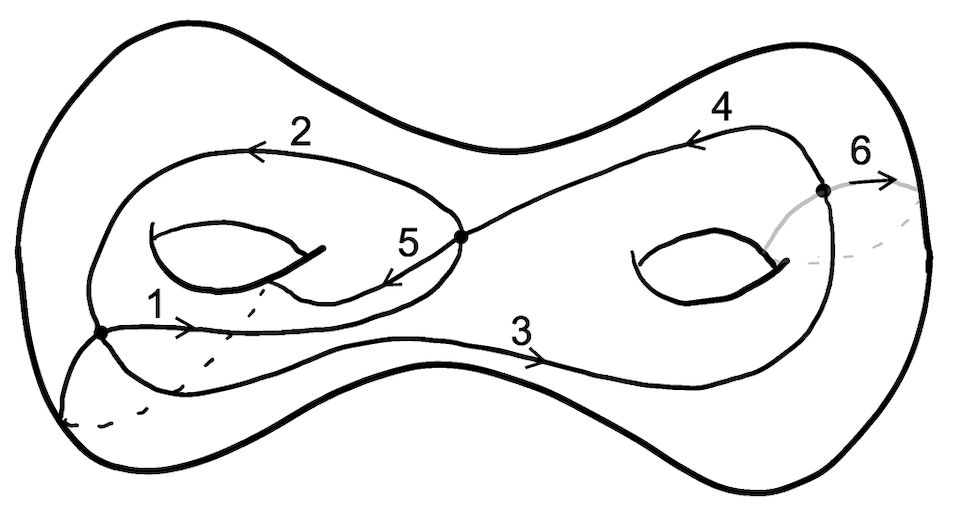} \end{tabular} &  \begin{tabular}{c} \includegraphics[width=0.32\textwidth]{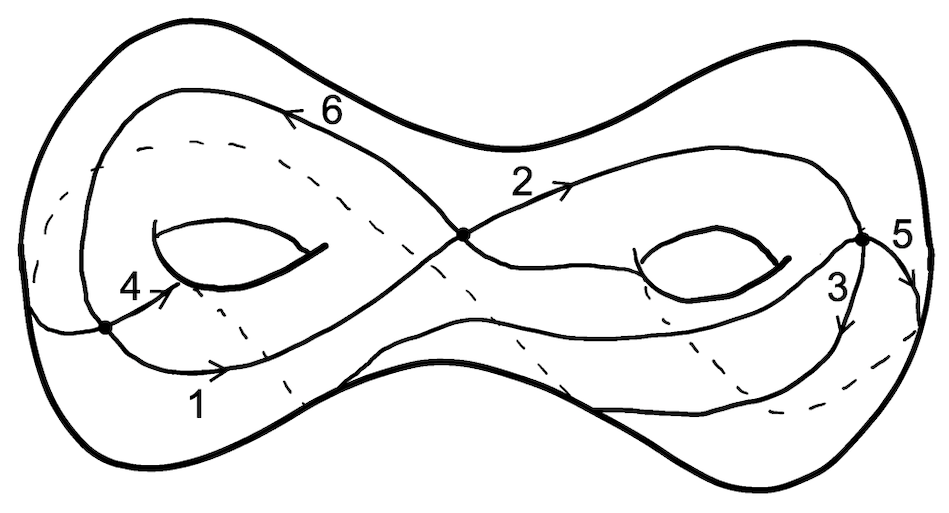} \end{tabular} \\
\begin{tabular}{c} \includegraphics[width=0.20\textwidth]{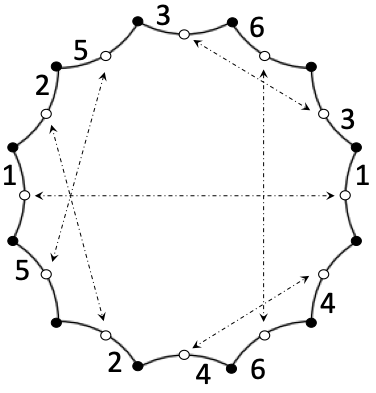} \end{tabular} & \begin{tabular}{c} \includegraphics[width=0.20\textwidth]{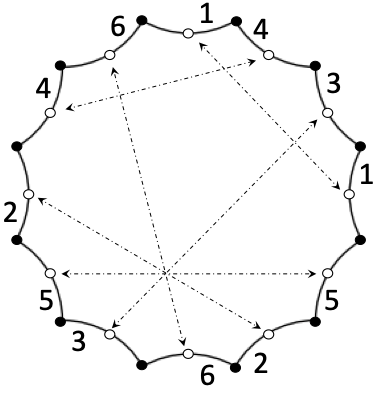} \end{tabular} \\ \hline
\end{tabular}
} \caption{The six uniform multicurves $\Gamma_i$ of type $(2,4,12)$ in genus $g=2$ and the corresponding hyperbolic surfaces at which each length function $\ell_{\Gamma_i}$ reaches its minimum.}\label{fig_2_4_12_gen2}
\end{figure}

To obtain the permutation representation of these dessins, we have labelled the edges $1$ to $6$ as is shown in the figure, and then added  labels $7$ to $12$ in such a way that the permutation $\sigma_0$ is in every case $(1,12)(2,11)(3,10)(4,9)(5,8)(6,7)$. Then the permutation $\sigma_1$ and the product $\sigma_1^2 \sigma_0$ are in each case as it is shown in Table \ref{ta:list}.

\begin{table}[!htbp]
\centering{ 
\begin{tabular}{|c|c|}
\hline
$\sigma_1$ & $\sigma_1^2 \sigma_0$ \\ 
\hline
\begin{tabular}{c}
$(1,3,12,11)(2,6,9,8)(4,5,10,7)$ \\
$(1,3,12,11)(2,5,10,9)(4,5,10,7)$\\
$(1,3,12,11)(2,4,8,10)(5,7,9,6)$\\
$(1,6,9,8)(2,10,12,4)(3,7,11,5)$\\
$(1,11,8,3)(2,5,12,9)(4,7,10,6) $\\
$(1,4,7,10)(2,6,12,8)(3,5,11,9)$
\end{tabular}& 
 \begin{tabular}{c}
(1)(2,3,4)(5,6)(7,8)(9,10,11)(12) \\
(1)(2,3)(4,5)(6)(7)(8,9)(10,11)(12) \\
(1)(2,3,4,5)(6)(7)(8,9,10,11)(12) \\
(1,2,3,4)(5,6)(7,8)(9,10,11,12)\\ 
(1,2,3,4,5)(6)(7)(8,9,10,11,12)\\ 
(1,2,3,4,5,6)(7,8,9,10,11,12) 
 \end{tabular} \\
 \hline
\end{tabular}
}
\caption{The monodromy representation of the six dessins determined by uniform multicurves of genus 2 and type $(2,4,12)$ are given by  $\sigma_0=(1,12)(2,11)(3,10)(4,9)(5,8)(6,7)$ and the permutation $\sigma_1$ in this list. The cycle structure of $\sigma_1^2\sigma_0$ is related to the components of the multicurve.}  \label{ta:list}
\end{table}

\

 It is interesting to notice that despite of the fact that the sum of the lengths of the curves in $\Gamma_j$ is the same for all $j$, the set of individual lengths of the curves is different for each of the six collections. Note also that only $\Gamma_6$ has just one component, i.e. it is the only element in this family of dessins that produces a filling curve.
 \end{example}

\begin{example} \rm
Consider the genus 2 uniform dessin d'enfant $\mathcal{D} \subset X$ of type $(2,6,6)$ determined by the permutations
$$
\begin{array}{c}
\sigma_0  =  (1,7)(2,8)(3,9)(4,10)(5,11)(6,12)\\
\sigma_1  =  (1,2,3,4,5,6)(7,8,9,10,11,12)\\
\end{array}
$$
A straightforward computation gives 
$$
\sigma_1^3 \sigma_0=(1,10)(2,11)(3,12)(4,7)(5,8)(6,9).
$$ 
Thus, this dessin defines a multicurve $\Gamma=\{\gamma_1, \gamma_2, \gamma_3 \}$ (left hand side of Figure \ref{fig_y2=x6-1}) and, by Proposition \ref{pr:destomulticurve}, the minimum of the geodesic length function $\ell_{\Gamma}$ is achieved at the Grothendieck-Belyi suface $S_{\mathcal{D}}$ by a geodesic $\Gamma'=\varphi_{\mathcal{D}}(\Gamma) \subset S_{\mathcal{D}}$. It turns out that in this case  $S_{\mathcal{D}}$ corresponds to the complex algebraic curve $C:y^2=x^6-1$ and the Belyi function to the meromorphic function $\beta(x,y)=1-x^6$ (see Example 4.69 of \cite{Girondo_Gonzalez-Diez_2012}). This means that $S_{\mathcal{D}}$ consists of the affine points of the algebraic curve $C$ together with two \emph{points at infinity}, $\infty_1$ and $\infty_2$, which are the poles of $\beta$ and account for the centers of the two faces of the dessin $\mathcal{D}$ (or rather $\varphi_{\mathcal{D}}(\mathcal{D})$). Moreover, again by Proposition  \ref{pr:destomulticurve}, the multicurve $\Gamma'$ has $\beta^{-1}([0,1])$ as underlying graph. In order to visualize how the multicurve sits on the algebraic model of $S_{\mathcal{D}}$ we first note that the Belyi function $\beta:S_{\mathcal{D}} \longrightarrow \widehat{\mathbb{C}}$ only represents the quotient map  $\beta:S_{\mathcal{D}} \longrightarrow S_{\mathcal{D}}/G$, where $G$ is the automorphism group generated by $\tau(x,y)=(e^{2\pi i / 6}x,y)$ and $J(x,y)=(x,-y)$, the so-called \emph{hyperelliptic involution}. This is because $G$ is the  abelian group $C_6 \times C_2$ and $\beta$ is obviously a $G$-invariant function of  degree 12.

 \begin{figure}[!htbp]
\centering{
\begin{tabular}{ccc}
\includegraphics[width=0.45\textwidth]{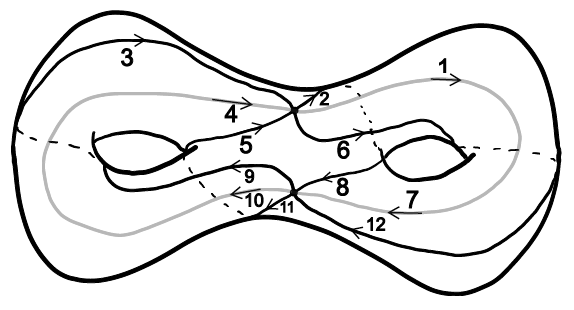} & \quad \quad \quad & 
\includegraphics[width=0.35\textwidth]{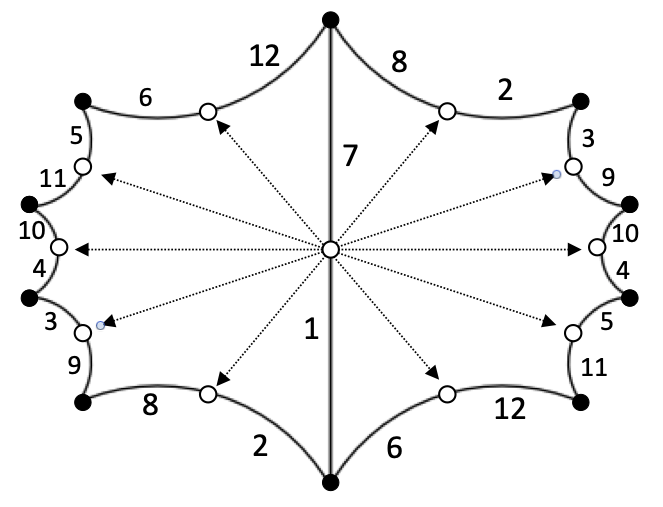} \ \\ 
\end{tabular}}
\caption{A length-minimizing filling curve in the compact Riemann surface of equation $y^2=x^6-1$.}\label{fig_y2=x6-1}
\end{figure}

Clearly, $\beta^{-1}(0)=\{w_k=(e^{2\pi i k/ 6},0), k=0, 1, \ldots , 5\}\subset C$ and $\beta^{-1}(1)=\{b^{\pm}:=(0, \pm i) \} \subset C$ are the white and black points of the dessin. The arc $I_1=\{ (\sqrt[6]{1-t}, i \sqrt{t} ) \ | \ 0 \le t \le 1\}\subset \beta^{-1}([0,1])$ is the edge of the dessin that connects the black vertex $b^+$ to the white vertex $w_0=(1,0)$ and $I_7=J(I_1)$ the edge that connects $w_0$ to the black vertex $b^-$. Now, as the cycle decomposition of $\sigma_1$ indicates, the black vertices have degree 6, so there must be 6 edges emanating from each of them, the ones corresponding to $b^-$ being $\tau^k(I_7)$, $k=0,1, \ldots , 5$. By the construction of the multicurve $\Gamma$ carried out in Proposition \ref{pr:destomulticurve}, after $I_7$ the next side we must traverse is the opposite to $I_7$, namely $I_4:=\tau^3(I_7)=(\tau^3 J)(I_1)$, which connects $b^-$ to $\tau^3(w_0)= w_3$, and the next one must be $I_{10}:=J(I_4)$, which leads us back to the vertex $J(b^-)=b^+$. So this sequence completes the first component
$$
\gamma_1=I_1\cup J(I_1) \cup (\tau^3 J)(I_1) \cup J(\tau^3 J)(I_1) 
$$
of our multicurve $\Gamma=\{\gamma_1, \gamma_2, \gamma_3\}$, the remaining ones being $\gamma_2=\tau(\gamma_1)$ and $\gamma_3=\tau^2(\gamma_1)$. We notice that the hyperbolic length of all these (Euclidean) unitary intervals $I_k \subset \mathbb{C}^2$ is 
$$
\cosh^{-1} \left( \displaystyle\frac{\cos \pi/6}{\sin \pi/12} \right)= \cosh^{-1} \left(\sqrt{3(2+\sqrt{3})} \right).
$$

\

As for the uniformization of our surface $S_{\mathcal{D}}\simeq \mathbb{D} / K$ we note that, since the numbering of edges can be chosen so as to match the action of $J$ and $\tau$ with the permutations $\sigma_0$ and $\sigma_1$, the group $K$, which is the kernel of the monodromy homomorphism $\omega: \Delta=\Delta(2,6,6) \longrightarrow \mathbb{S}_{12}$ determined by $x \longmapsto \sigma_0$, $y\longmapsto \sigma_1$, enjoys the property  that under the isomorphism $\Delta/ K \simeq G< \mathrm{Aut}(S_{\mathcal{D}})$ $x$ corresponds to $J$ and $y$ corresponds to $\tau$. Here we are using  the excepctional fact that because $|G|=\mathrm{deg}(\mathcal{D})=12$ the group $K= \ker \omega$ has already index 12 in $\Delta$, thus the groups $\ker (\omega) $ and $\omega^{-1}(\mathrm{Stab}_M(1))$ agree. (The right hand side of Figure \ref{fig_y2=x6-1} shows the Grothendieck-Belyi surface $S_{\mathcal{D}}$ as the quotient space of the action of certain side-pairing transformations on the fundamental domain of $K$).

\

Finally, the fact that $S_{\mathcal{D}}$ corresponds to the algebraic curve $C:y^2=x^6-1$  reflects our claim that the minimum of the geodesic length functions associated to uniform multicurves is attained at Riemann surfaces defined over a number field, which in this case is the field $\mathbb{Q}$ of rational numbers. This last property cannot be expected in general. In fact, since the \emph{absolute Galois group} $\mathrm{Gal}(\overline{\mathbb{Q}}|\mathbb{Q})$ is known to act faithfully on multicurves of given type $(2,2m,k)$  (\cite{Gonzalez-Diez_Jaikin_2015}), as the genus grows, some of the minimizing Riemann surfaces $S_{\mathcal{D}}$  will correspond to algebraic curves $F(x,y)= \sum a_{ij}x^i y^j=0$ whose coefficients $a_{ij}$ generate non-trivial field extensions of $\mathbb{Q}$.
\end{example}

\begin{example}[Dual curves] \rm
Let $\mathcal{D} \subset X$ a uniform dessin of type $(2,2m,2l)$ with corresponding Belyi pair $(S, \beta)$ so that $\mathcal{D}$ corresponds to a filling multicurve $\Gamma$ such that $\ell_{\Gamma}$ reaches its minimum at the hyperbolic surface $S$. Set $\widetilde{\beta}=\beta/(\beta -1)$. Then $\widetilde{\beta}$ is a new Belyi function on the same Riemann surface $S$ which interchanges the role of the black vertices (the fibre $\beta^{-1}(1)$) with the role of the face centers (the poles of $\beta$) while keeping the set of white vertices fixed. Corresponding to $\widetilde{\beta}$ there is a (\emph{dual}) uniform dessin $\widetilde{\mathcal{D}}$ and an associated filling multicurve $\widetilde{\Gamma}$ of type $(2,2l,2m)$. It is easy to see (see e.g. Section 2.2 in \cite{Gonzalez-Diez_Torres_2012}) that if $(\sigma_0, \sigma_1)$ is the permutation representation of $\mathcal{D}$, then the permutation representation of $\widetilde{\mathcal{D}}$ is $(\widetilde{\sigma_0}=\sigma_0, \widetilde{\sigma_1}=\sigma_{\infty}=(\sigma_0 \sigma_1)^{-1})$.

This observation can be used to produce different multicurves $\Gamma$ and $\widetilde{\Gamma}$ whose corresponding geodesic length functions $\ell_{\Gamma}$ and $\ell_{\widetilde{\Gamma}}$ reach their minimum at the same hyperbolic surface $S$. For instance if $\mathcal{D}$ (hence $\Gamma$) is determined by $\sigma_0=(1,2)(3,4)\cdots (11,12)$, $\sigma_1=(1,7,4,2,3,6)(5,11,8,9,12,10)$ then $\widetilde{\mathcal{D}}$ (hence $\widetilde{\Gamma}$) is determined by $\widetilde{\sigma_0}=\sigma_0$ and $\widetilde{\sigma_1}=(1,5,9,7,2,3)(4,8,12,10,11,6)$. 

Now 
$\sigma_1^3 \sigma_0=(3,6,9,8)(4,7,10,5)(1)(2)(11)(12)$ whereas $\widetilde{\sigma_1}^3 \widetilde{\sigma_0}=(1,5,12,8)(2,7,11,6)(3,10)(4,9)$; that is, $\Gamma$ has 3 components whereas its dual multicurve $\widetilde{\Gamma}$ has only 2.
\end{example}

\subsection{The non-clean uniform case} 

The graph underlying a filling multicurve $\Gamma$ can be sometimes bicoloured without adding extra vertices in the middle of the arcs of $\Gamma$. This happens when the graph underlying $\Gamma$ is bipartite. That is, the set of self-intersection points of $\Gamma$ splits as $\mathcal{B} \cup \mathcal{W}$, and every arc of $\Gamma$ connects a point in $\mathcal{B}$ and a point in $\mathcal{W}$. Of course, $\mathcal{B}$ and $\mathcal{W}$ are going to be the black and the white vertices of an obvious new dessin $\mathcal{D}^*_{\Gamma}$ that is well defined up to interchanging vertex colours. 

\

The following result is completely analogous to Proposition \ref{pr:destomulticurve}, and so it can be proved in a similar way:

\begin{proposition} \label{pr:destomulticurve_2}
Let  $\mathcal{D}\subset X$ be a  uniform dessin d'enfant of type $(2l,2m,j)$ and genus $g\ge 2$ with permutation representation $\sigma_0, \sigma_1$. Let $\beta: S=\mathbb{D} / K \longrightarrow \mathbb{D}/ \Delta$ be the corresponding Belyi pair and $\varphi_{\mathcal{D}}: X \longrightarrow S$ the associated homeomorphism. Then there exists a filling bipartite multicurve   $\Gamma'= \{\gamma'_1, \ldots, \gamma'_r \}$ on $S$ such that $\varphi_{\mathcal{D}}(\mathcal{D})=\beta^{-1}([0,1])\simeq \mathcal{D}^*_{\Gamma'}$. Moreover, the curves $\gamma'_i$ can be chosen to be geodesics of $S$ and the number $r$ of components of $\Gamma'$ agrees with half the number of cycles of $\sigma_1^m \sigma_0^l$.
\end{proposition}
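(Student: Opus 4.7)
The plan is to imitate the proof of Proposition \ref{pr:destomulticurve}, adjusting the straight-line continuation rule at each vertex to account for the higher degree $2l$ at white vertices. By the Lemma in Section \ref{sec:hypuniclean}, uniformity of $\mathcal{D}$ forces $K$ to be torsion free and so the edges of $\beta^{-1}([0,1])\subset S$ are geodesic segments for the hyperbolic metric on $S$; one may realize a fundamental domain for $K$ as a union of hyperbolic $2j$-gons with alternating angles $\pi/l$ (at white vertices) and $\pi/m$ (at black vertices), so that precisely $2l$ such polygons meet at every white vertex and $2m$ at every black vertex.

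I would then describe the straight-line continuation of a geodesic through a vertex. Since consecutive edges at a white vertex $w$ of degree $2l$ meet with angle $\pi/l$, the edge opposite a given edge $e$ at $w$, i.e.\ the one making angle $\pi$ across $w$, is obtained by advancing $l$ positions in the cyclic order around $w$, that is, by applying $\sigma_0^l$. Analogously, at a black vertex of degree $2m$ the opposite edge is $\sigma_1^m(e)$. Starting from an arbitrary edge $e_1$, I would form the sequence
$$
e_1,\ \sigma_0^l(e_1),\ \sigma_1^m\sigma_0^l(e_1),\ \sigma_0^l\sigma_1^m\sigma_0^l(e_1),\ (\sigma_1^m\sigma_0^l)^2(e_1),\ \ldots
$$
whose consecutive edges meet at every vertex they cross with interior angle $\pi$. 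Hence the concatenation is a closed geodesic $\gamma^{e_1}\subset S$ which closes up after finitely many steps, namely the cycle length of $e_1$ under the permutation $\sigma_1^m\sigma_0^l$.

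Iterating this construction with edges not yet traversed, I obtain a family $\Gamma'=\{\gamma'_1,\ldots,\gamma'_r\}$ of closed geodesics whose union is the entire graph $\beta^{-1}([0,1])$. The multicurve $\Gamma'$ is filling because the components of its complement are the interiors of the $2j$-gons in the fundamental domain, all homeomorphic to discs. It is bipartite with colour classes $\mathcal{W}=\beta^{-1}(0)$ and $\mathcal{B}=\beta^{-1}(1)$, since every arc of $\Gamma'$ connects a point of $\mathcal{W}$ to one of $\mathcal{B}$; consequently $\varphi_{\mathcal{D}}(\mathcal{D})=\beta^{-1}([0,1])\simeq\mathcal{D}^*_{\Gamma'}$. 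The count of components is then identical to the clean case: each closed geodesic $\gamma'_i$ contributes exactly two cycles of $\sigma_1^m\sigma_0^l$, one for each direction of traversal, so $r$ equals half the number of disjoint cycles of $\sigma_1^m\sigma_0^l$.

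The delicate point, absent in Proposition \ref{pr:destomulticurve}, is that when $l>1$ a white vertex $w$ is now a genuine self-intersection point of $\Gamma'$ at which up to $l$ distinct geodesic components may cross. One has to verify that these crossings are transversal with pairwise angles $\pi/l$ and that the straight-line continuation rule is internally consistent, in the sense that the $l$ ``diameters'' of the wheel at $w$ partition the $2l$ incident edges into $l$ well-defined opposite pairs. Both facts rely on the uniformity of $\mathcal{D}$, which ensures that all $2l$ angles at $w$ are equal, and guarantee automatically that $\Gamma'$ is in minimal position.
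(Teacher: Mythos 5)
Your proposal is correct and follows exactly the route the paper intends: the paper gives no separate argument for Proposition \ref{pr:destomulticurve_2}, stating only that it is ``completely analogous'' to Proposition \ref{pr:destomulticurve}, and your adaptation — replacing the straight-line continuation $\sigma_0$ at degree-$2$ white vertices by $\sigma_0^l$ at degree-$2l$ white vertices (angle $l\cdot\pi/l=\pi$), keeping $\sigma_1^m$ at black vertices, and counting components via the companion cycles of $\sigma_1^m\sigma_0^l$ — is precisely that analogy carried out. Your closing remark on the consistency of the $l$ opposite pairs at a white vertex is a sensible addition at the same level of rigor as the paper's own argument.
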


We also have the following generalization of Theorem \ref{th:main}:

\begin{theorem} \label{th:main_2}
Let $X$ be a closed oriented topological surface of genus $g\ge2$, and let $\Gamma \subset X$ be a filling multicurve with $d$ arcs whose  underlying graph $\Gamma$ is bipartite. Suppose that the corresponding  dessin d'enfant  $\mathcal{D}^*_{\Gamma}$ is uniform of type $(2l, 2m, j)$. Then the geodesic length function $\ell_{\Gamma}$ reaches its minimum precisely at the Grothendieck-Belyi Riemann surface $S^*_{\Gamma}$ determined by the dessin  $\mathcal{D}^*_{\Gamma}$. Moreover if we denote by $\beta_*: S^*_{\Gamma} \to \widehat{\mathbb{C}}$ the corresponding Belyi function, the multigeodesic representing $\Gamma$ in $S^*_{\Gamma}$ has 
 $\beta_*^{-1}([0,1])$ as underlying graph and its length  is  
$$
\ell^*_{2l,2m,j,d} := d \cdot \cosh^{-1} \left(\displaystyle\frac{\cos\frac{\pi}{2m} \cos\frac{\pi}{2l} + \cos \frac{\pi}{j}}{\sin\frac{\pi}{2m} \sin\frac{\pi}{2l}} \right).
$$
\end{theorem}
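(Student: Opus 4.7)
The plan is to mirror the proof of Theorem \ref{th:main} step by step, replacing the triangle group $\Delta(2,2m,k)$ by $\Delta(2l,2m,j)$ throughout. Using the identification $T(X) \simeq T(S^*_\Gamma)$ induced by $\varphi_\Gamma$ together with Proposition \ref{pr:destomulticurve_2}, the task reduces to showing that the function $\ell_{\Gamma'}: T(S^*_\Gamma) \to \mathbb{R}^+$ attains its minimum at $[\mathrm{Id}, S^*_\Gamma]$, where $\Gamma' = \varphi_\Gamma(\Gamma)$ has underlying graph $\beta_*^{-1}([0,1])$.

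I would first treat the case when $\mathcal{D}^*_\Gamma$ is regular. Then $\beta_*$ is the quotient map by an isometry group $H = \Delta(2l,2m,j)/K < \mathrm{Isom}^+(S^*_\Gamma)$ preserving $\beta_*^{-1}([0,1])$ setwise, so that $\ell_{\Gamma'}([f,S]) = \ell_{\Gamma'}([f \circ h, S])$ for every $h \in H$ and every $[f,S] \in T_g$. Kerckhoff's uniqueness of the minimum then forces the minimizer to lie in the fixed-point set $\mathrm{Fix}(H) \subset T_g$, which by the Bers--Kravetz--Harvey theory is the Teichm\"uller space of the quotient orbifold $S^*_\Gamma/H$ of signature $(0;2l,2m,j)$, hence a single point---necessarily $[\mathrm{Id}, S^*_\Gamma]$. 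The general uniform case then follows by the same normalization trick as in Theorem \ref{th:main}: passing to the core subgroup
\[
\widetilde{K} = \bigcap_{\gamma \in \Delta(2l,2m,j)} \gamma K \gamma^{-1} \vartriangleleft \Delta(2l,2m,j)
\]
yields a regular cover $p: \widetilde{S^*_\Gamma} \to S^*_\Gamma$ and a lifted multicurve $\widetilde{\Gamma} = p^{-1}(\Gamma')$; lifting any hypothetical competing minimizer of $\ell_{\Gamma'}$ to $\widetilde{S^*_\Gamma}$ produces a value of $\ell_{\widetilde{\Gamma}}$ strictly smaller than the known minimum from the regular case, giving a contradiction.

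The length formula is then a direct hyperbolic computation: each of the $d$ edges of $\beta_*^{-1}([0,1])$ is a copy of the segment $[w_a,w_b]$ of the fundamental triangle $T_{2l,2m,j}$, which has angles $\pi/2l$ and $\pi/2m$ at its endpoints and is opposite to the vertex of angle $\pi/j$. The hyperbolic law of cosines gives its length as $\cosh^{-1}\bigl((\cos(\pi/2m)\cos(\pi/2l) + \cos(\pi/j))/(\sin(\pi/2m)\sin(\pi/2l))\bigr)$, and multiplying by $d$ yields the claimed $\ell^*_{2l,2m,j,d}$. The main subtlety I anticipate is confirming that $\mathrm{Fix}(H)$ still reduces to a single point when two of the three orbifold orders exceed $2$; this is essentially automatic from the fact that every hyperbolic $2$-orbifold of signature $(0;a,b,c)$ has trivial Teichm\"uller space, but it is the single step where the non-clean case differs substantively from Theorem \ref{th:main} and so must be checked carefully.
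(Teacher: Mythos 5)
Your proposal is correct and follows essentially the same route as the paper, which itself disposes of this theorem by saying the argument of Theorem \ref{th:main} carries over verbatim with $\Delta(2l,2m,j)$ in place of $\Delta(2,2m,k)$ (regular case via the $H$-invariance of $\ell_{\Gamma'}$ and the triviality of $T_{0,3}$, then the core-subgroup/normalization trick); your observation that $\mathrm{Fix}(H)$ is a point for any signature $(0;a,b,c)$ is exactly the right justification. Your length computation also matches the paper's: the $d$ edges of $\mathcal{D}^*_{\Gamma}$ are the sides of the $n=d/j$ equilateral $2j$-gons tiling the fundamental domain of $K^*$, each a copy of the side of $T_{2l,2m,j}$ opposite the angle $\pi/j$, whose length the hyperbolic law of cosines gives as stated.
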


The proof of this theorem follows the lines of the proof of Theorem \ref{th:main}. The expression for the minimal length $\ell^*_{2l,2m,j,d}$ comes from the fact that if $K^*<\Delta(2l,2m,j)$ is the Fuchsian representation of $\mathcal{D}^*_{\Gamma}$, the fundamental  domain of $K^*$   is a union of $n=d/j$ equilateral $2j$-gons with alternate angles $\pi/l$ and $\pi/m$.

\

An interesting remark can be made here. It may occur that the multicurve $\Gamma \subset X$ in Theorem \ref{th:main} whose associated dessin $\mathcal{D}_{\Gamma}$ is of type $(2,2m,k)$, has as underlying graph a bipartite graph thereby defining also a dessin $\mathcal{D}^*_{\Gamma}$, then necessarily of type $(2m,2m,k/2)$. Then we can also apply  Theorem \ref{th:main_2} to obtain the minimum of the function $\ell_{\Gamma}$. By the uniqueness of the minimum of $\ell_{\Gamma}$ we must have $\ell_{2m,k,d}=\ell^*_{2m,2m,k/2,d}$, and $S_{\Gamma}\simeq S^*_{\Gamma}$. The equality concerning length can be easily checked directly. But the identity of the hyperbolic surfaces $S_{\Gamma}=\mathbb{D}/K$ and $S^*_{\Gamma}=\mathbb{D}/K^*$ has a more interesting explanation. The point  is that, since the triangle group $\Delta(2m,2m,k/2)$ is contained in the triangle group $\Delta(2,2m,k)$, the inclusion $K^*<\Delta(2m,2m,k/2)$ giving the Fuchsian representation of $\mathcal{D}^*_{\Gamma}$ immediately gives $K^*<\Delta(2,2m,k)$ as the Fuchsian representation of $\mathcal{D}_{\Gamma}$. The combinatorial procedure that produces  $\mathcal{D}_{\Gamma}$ from $\mathcal{D}^*_{\Gamma}$ by declaring all vertices black and adding white vertices in the middle of the edges has been referred to as  \emph{medial surgery} in the literature (see \cite{Girondo_2003}).

\subsection{Homotopy invariance of the dessins defined by uniform filling multicurves} \label{sec:hominv}

While the Riemann surface minimizing the geodesic length function of a filling multicurve is, by definition, an invariant of the homotopy class of the multicurve, the dessin $\mathcal{D}_{\Gamma}$ (and, consequently, the Belyi-Grothendieck surface $S_{\Gamma}$) associated to a filling multicurve $\Gamma$ (in minimal position) is not; for instance, with a small deformation of the grey curve in Figure \ref{fig_y2=x6-1} one can obviously change the number of self-intersection points and the number of faces. Our last result in this section shows that the situation is more agreeable if we restrict ourselves to uniform dessins. To make our statement rigorous we introduce the term \emph{uniform filling multicurve} to refer to filling multicurves which admit a representative $\Gamma$ in minimal position such that one of the following conditions holds:
\begin{enumerate}
\item[1)]  $\mathcal{D}_{\Gamma}$ is a uniform dessin. 
\item[2)] The graph underlying $\Gamma$ is bipartite and $\mathcal{D}^*_{\Gamma}$ is a uniform dessin.
\end{enumerate}

\begin{corollary}
Let $\Gamma_1$ and $\Gamma_2$ be homotopically equivalent uniform filling multicurves of a closed oriented surface $X$ of genus $g\ge 2$, in minimal position. Then there exists $f \in \mathrm{Homeo}^+(X)$ such that $\Gamma_2=f(\Gamma_1)$, that is, $\mathcal{D}_{\Gamma_1} \simeq \mathcal{D}_{\Gamma_2}$ (resp. $\mathcal{D}^*_{\Gamma_1} \simeq \mathcal{D}^*_{\Gamma_2}$).
\end{corollary}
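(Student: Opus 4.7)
The plan is to deduce the corollary directly from the uniqueness part of Kerckhoff's theorem combined with Theorem \ref{th:main} (resp.\ Theorem \ref{th:main_2}). For concreteness I will describe the argument in case 1); the argument in case 2) is entirely parallel, replacing Theorem \ref{th:main} by Theorem \ref{th:main_2} and $\mathcal{D}_{\Gamma}$ by $\mathcal{D}^*_{\Gamma}$.

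First I would observe that since $\Gamma_1$ and $\Gamma_2$ are freely homotopic as multicurves on $X$, the geodesic length functions agree: $\ell_{\Gamma_1}=\ell_{\Gamma_2}$ as functions on $T(X)$. By Theorem \ref{th:main}, the left-hand side is minimised uniquely at the Grothendieck--Teichm\"uller point $[\varphi_{\Gamma_1},S_{\Gamma_1}]$, while the right-hand side is minimised uniquely at $[\varphi_{\Gamma_2},S_{\Gamma_2}]$. Hence the two markings agree in $T(X)$: there is a biholomorphism $\Phi\colon S_{\Gamma_1}\to S_{\Gamma_2}$ such that $\varphi_{\Gamma_2}^{-1}\circ\Phi\circ\varphi_{\Gamma_1}$ is isotopic to $\mathrm{id}_X$.

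Next I would use the additional information provided by Theorem \ref{th:main} about the geodesic representative. For $i=1,2$ the set $G_i:=\beta_i^{-1}([0,1])\subset S_{\Gamma_i}$ is, as a subset of $S_{\Gamma_i}$, the unique geodesic multicurve in the free homotopy class of $\varphi_{\Gamma_i}(\Gamma_i)$. Because $\varphi_{\Gamma_2}^{-1}\circ\Phi\circ\varphi_{\Gamma_1}\simeq\mathrm{id}_X$, the image $\Phi(\varphi_{\Gamma_1}(\Gamma_1))$ is freely homotopic to $\varphi_{\Gamma_2}(\Gamma_1)$, which in turn is freely homotopic to $\varphi_{\Gamma_2}(\Gamma_2)$ (because $\Gamma_1\simeq\Gamma_2$ in $X$). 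Since $\Phi$ is an isometry for the intrinsic hyperbolic structures, $\Phi(G_1)$ is the unique geodesic representative of that class on $S_{\Gamma_2}$, hence $\Phi(G_1)=G_2$ as subsets.

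Finally, I would set $f:=\varphi_{\Gamma_2}^{-1}\circ\Phi\circ\varphi_{\Gamma_1}\in\mathrm{Homeo}^+(X)$ and compute, using that $\varphi_{\Gamma_i}$ sends $\Gamma_i$ (viewed as the underlying set of $\mathcal{D}_{\Gamma_i}$) onto $G_i$,
\[
f(\Gamma_1)=\varphi_{\Gamma_2}^{-1}\bigl(\Phi(\varphi_{\Gamma_1}(\Gamma_1))\bigr)=\varphi_{\Gamma_2}^{-1}\bigl(\Phi(G_1)\bigr)=\varphi_{\Gamma_2}^{-1}(G_2)=\Gamma_2.
\]
Because $f$ is an orientation-preserving homeomorphism of $X$ carrying $\Gamma_1$ onto $\Gamma_2$, it sends self-intersection points to self-intersection points and midpoints of arcs to midpoints of arcs, and therefore induces a colour-preserving isomorphism of bipartite graphs $\mathcal{D}_{\Gamma_1}\simeq\mathcal{D}_{\Gamma_2}$, as required.

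The only point that requires a little care is the middle paragraph: one has to check that the homotopy $\varphi_{\Gamma_2}^{-1}\Phi\varphi_{\Gamma_1}\simeq\mathrm{id}_X$ together with $\Gamma_1\simeq\Gamma_2$ on $X$ really forces $\Phi(G_1)$ and $G_2$ to be the same geodesic multicurve on $S_{\Gamma_2}$; but this is just the standard fact that a free homotopy class of (multi)curves has at most one geodesic representative in a hyperbolic surface, applied to the class of $\varphi_{\Gamma_2}(\Gamma_2)$.
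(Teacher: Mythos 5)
Your proof is correct and follows essentially the same route as the paper: both arguments use the equality $\ell_{\Gamma_1}=\ell_{\Gamma_2}$ and the uniqueness of the Kerckhoff minimum (via Theorem \ref{th:main}, resp.\ Theorem \ref{th:main_2}) to produce an isometry between the two Grothendieck--Belyi surfaces carrying $\beta_1^{-1}([0,1])$ onto $\beta_2^{-1}([0,1])$, and then conjugate by the marking homeomorphisms to obtain $f$. Your version merely spells out more explicitly the step the paper compresses into ``by uniqueness, there is an isometry $\tau$'' (namely, equality of Teichm\"uller points plus uniqueness of geodesic representatives), which is a welcome clarification but not a different argument.
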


\begin{proof}
Let us assume that $\Gamma$ belongs to the case 1) above. The proof for the case 2) is exactly the same, replacing $S_i$ with $S^*_i$, $\mathcal{D}_{\Gamma_i}$ with  $\mathcal{D}^*_{\Gamma_i}$ and invoking Theorem \ref{th:main_2} instead of Theorem \ref{th:main}.

\

Let $(S_i, \beta_i)$ be the Belyi pair associated to the dessin $\mathcal{D}_{\Gamma_i}$, and $\varphi_i: X \longrightarrow S_i$ the associated homeomorphism that maps $\Gamma_i$ bijectively into $\beta_i^{-1}([0,1])$.

By Theorem \ref{th:main}, $\beta_i^{-1}([0,1])$ is the geodesic multicurve in $S_i$ which minimizes the geodesic length function $\ell_{\Gamma_i}: T_g \longrightarrow \mathbb{R}$. Since $\Gamma_1$ and $\Gamma_2$ are homotopically equivalent, the functions $\ell_{\Gamma_1}$ and $\ell_{\Gamma_2}$ agree. Hence, by uniqueness,  there is an isometry $\tau: S_1 \longrightarrow S_2$ such that $\beta_2^{-1}([0,1])= \tau \circ \beta_1^{-1}([0,1])$, that is $\varphi_2(\Gamma_2)= \tau \circ \varphi_1(\Gamma_1)$. It follows that $\varphi_2^{-1} \circ \tau \circ \varphi_1 \in \mathrm{Homeo}^+(X)$ solves our claim.
\end{proof}

This last result shows in particular that the type of a uniform filling multicurve is well defined.

\section{Uniform filling curves in general position}

Filling curves in general position are of special interest. Being in general position requires that the degree of each self-intersection point $p$ equals 4, that is $m_p=2$ with the notation we have been using throughout.
We devote this section to analyze some results about the existence of such curves. 
 
 \subsection{A surgery procedure}
 In his 2015 CUNY thesis \cite{Arettines_2015}, Arettines gave a surgery procedure to inductively construct in any genus $g$ a uniform filling curve $\gamma$ in general position whose complement has $n$ faces for $n=1$ (\cite{Arettines_2015}, Th. 4.3.1) and $n=2$  (\cite{Arettines_2015}, Th. 4.3.2). For $n=3$ this will only be possible when  $g\equiv 1 \pmod{3}$ as we note below. Similar constructions, for $n\ge 3$, have been given in \cite{Parsad-Sanki_2022} but all the examples there are non-uniform.
 
 Next we interpret this topological result in terms of the permutations $\sigma_0, \sigma_1$ that represent the monodromy of the corresponding dessins $\mathcal{D}_{\gamma}$. In view of what has gone before this will allow us to determine the Riemann surface at which the geodesic length functions of these filling curves attain their minima. Moreover, this approach permits us to solve the analogous question 
for the case of $n=3$ faces. 
  
 \
 
 Let $\mathcal{D}$ be a dessin of genus $g$, type $(2,4,k)$, degree $E$ and $n$ faces such that all white vertices have degree 2 and all black vertices have degree 4. Let 
 $(\sigma_0, \sigma_1)$ be the permutation representation of $\mathcal{D}$.
 
 Let $(a,b)$ be a 2-cycle of $\sigma_0$, such that $a$ and $b$ lie in different 4-cycles of $\sigma_1$. Thus, we can write the disjoint cycle decomposition of $\sigma_0$ and $\sigma_1$ as
$$
\begin{array}{c}
\sigma_0=(a,b)\sigma'_0\\
\ \\
\sigma_1=\sigma'_1(a,l_1, l_2, l_3)(b,l_4, l_5, l_6)
\end{array}
$$

Define now the two following elements of $\mathbb{S}_{E+8}$:
$$
\begin{array}{c}
\widetilde{\sigma_0}=\sigma'_0(a,E+5)(E+1,E+6)(E+2,E+7)(E+3,E+8)(E+4,b)\\
\ \\
\widetilde{\sigma_1}=\sigma_1(E+5,E+1,E+3,E+7)(E+4,E+2,E+6,E+8)
\end{array}
$$ 
which is the monodromy of a dessin $\widetilde{\mathcal{D}}$ with 4 additional white vertices, 2 additional black vertices and 8 additional edges compared to $\mathcal{D}$. The genus $\widetilde{g}$ of $\mathcal{D}$ is $g+1$ and, as we check below, the number of faces is still $n$, but either one face has increased the degree in 8 units or two faces have increased their degree in 2 and 6 units, respectively (see cases 1) and 2) below).

A direct computation shows that 
$$
\begin{array}{rcl}
\widetilde{\sigma_1} \widetilde{\sigma_0} & = & (a,l_1, l_2, l_3)(b,l_4, l_5, l_6)  (E+5,E+1,E+3,E+7)(E+4,E+2,E+6,E+8) \sigma'_1\\
& & \sigma'_0(a,E+5)(E+1,E+6)(E+2,E+7)(E+3,E+8)(E+4,b) \\
& & \\
& =  & (a,l_1, l_2, l_3)(b,l_4, l_5, l_6)(E+5,E+1,E+3,E+7)(E+4,E+2,E+6,E+8) \sigma'_1 \\
& &  \sigma'_0(a,b)(a,b)(a,E+5)(E+1,E+6)(E+2,E+7)(E+3,E+8)(E+4,b) \\
& & \\
& =  & (E+5,E+1,E+3,E+7)(E+4,E+2,E+6,E+8) \sigma_1\\
& &   \sigma_0(a,b)(a,E+5)(E+1,E+6)(E+2,E+7)(E+3,E+8)(E+4,b)
\end{array}
$$ 
and the latter is conjugated to \small
$$
\sigma_1   \sigma_0(a,b)(a,E+5)(E+1,E+6)(E+2,E+7)(E+3,E+8)(E+4,b)(E+5,E+1,E+3,E+7)(E+4,E+2,E+6,E+8)
$$ \normalsize
which equals
\begin{equation}\label{eq:prod}
\sigma_1 \sigma_0 (a, E+5, E+6, E+3, E+2, E+1, E+8)(b,E+4,E+7)
\end{equation}

We can distinguish now two possibilities:

1) If the edges $a$ and $b$ belong at both sides to the same face of $\mathcal{D}$ (e.g., the sides 4 and 13 in Figure \ref{fig_ej_planog2}), then 
$\sigma_1 \sigma_0$ decomposes as a disjoint product of cycles $\eta_1\cdots \eta_n$ where both $a$ and $b$ belong to, say, $\eta_n$. In this case (\ref{eq:prod}) equals
\small{$$
\eta_1 \cdots \eta_{n-1}(a,E+5,E+6,E+3,E+2,E+1,E+8,\eta_n(a), \eta_n^2(a), \ldots, b, E+4, E+7, \eta_n(b), \eta_n^2(b), \ldots, \eta_n^{-1}(a))
$$} \normalsize
 We see that in this case the dessin $\widetilde{\mathcal{D}}$  has degree $E+8$ and $n$ faces. There is an obvious one to one correspondence between faces of $\mathcal{D}$ and faces of $\widetilde{\mathcal{D}}$ that preserves all the degrees except one, that is increased in 8 units.

 2) If $a$ and $b$ belong to two different faces of $\mathcal{D}$ (e.g., the sides 1 and 16 in Figure \ref{fig_ej_planog2}), the cycle decomposition of $\sigma_1 \sigma_0$ has the form $\eta_1 \cdots \eta_{n-2}\eta_{n-1}, \eta_n$, with $a$ and $b$ belonging respectively to $\eta_{n-1}$ and $\eta_n$, say. In this case (\ref{eq:prod}) is 
 \small{$$
 \eta_1 \cdots \eta_{n-2}(a, E+5,E+6,E+3,E+2,E+1,E+8, \eta_{n-1}(a), \eta_{n-1}^2(a), \ldots , \eta_{n-1}^{-1}(a))(b, E+4, E+7, \eta_n(b), \eta_{n}^2(b),\eta_{n}^{-1}(b))
 $$} \normalsize
 In this case the obvious one to one correspondence between faces of $\mathcal{D}$ and faces of $\widetilde{\mathcal{D}}$  preserves  the degrees of $n-2$ of the $n$ faces. The degrees of the remaining two faces increase in 2 and 6 units respectively.
 
 Similar computations show that in both cases the number of cycles of $\widetilde{\sigma}_1^2 \widetilde{\sigma}_0$ agrees with the number of cycles of $\sigma_1^2 \sigma_0$.

\

We would like to point out that during the preparation of this work we learnt from Marston Conder that Darius Young has independently developed a group theoretical procedure similar to ours to construct a $g+1$ uniform clean dessin in general position out of a genus $g$ one.

 \subsection{Existence of uniform filling curves in general position with small number of faces}
 
If  $\gamma$ is a uniform filling (multi)curve in general position that decomposes the surface $X$ of genus $g\ge 2$ in $n$ faces with $k$ edges, a standard area computation yields
$$
(4g-4)\pi=n((k-2)\pi - k\frac{\pi}{2})
$$
that is 
\begin{equation} \label{eq:gnk}
8g-8=n(k-4)
\end{equation}

An inductive construction based on Arettines surgery may be used to show the existence of uniform filling multicurves up to 3 faces. Note that taking $n=1, 2, 3$ in (\ref{eq:gnk}) gives  $k=8g-4, 4g, (8g+4)/3$ respectively. Since $k$ must be an integer number, the genus $g\ge 2$ could be, in principle, arbitrary for $n=1$ or $n=2$, but restricts necessarily to $g\equiv 1\pmod{3}$ for $n=3$. The following result shows that there are indeed no further topological restrictions to the existence of this kind of uniform filling multicurves:

\begin{theorem} \label{th:exist}
1) For every genus $g\ge 2$ and for every $n\in\{ 1,2 \}$ there exists a uniform filling curve $\gamma$ in general position that decomposes the topological surface $X$ of genus $g$ into $n$ faces and such that the minimum of $\ell_{\gamma}$ is reached at a Riemann surface uniformized by a surface Fuchsian subgroup $K$ of a triangle group $\Delta(2,4,8g-4)$ (one face) or $\Delta(2,4,4g)$ (two faces).

2) For every genus $g>1$ congruent to 1 modulo 3, there exists a uniform filling curve $\gamma$ in general position that decomposes the topological surface $X$ of genus $g$ into three faces and such that the minimum of $\ell_{\gamma}$ is reached at a surface uniformized by a surface Fuchsian subgroup $K$ of a triangle group $\Delta(2,4,8g+4)/3)$.

In each case, the group $K$ is completely determined by the monodromy of the dessin associated to $\gamma$ and  the corresponding Riemann surface is defined over a number field.
\end{theorem}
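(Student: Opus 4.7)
The approach is to combine an inductive construction via the Arettines-type surgery developed in the previous subsection with Theorem \ref{th:main}, which at each step automatically identifies the minimum of $\ell_\gamma$ as the Grothendieck--Belyi surface $\mathbb{D}/K$ with $K<\Delta(2,4,k)$ attached to the constructed dessin; the fact that $S$ is defined over a number field then follows from the Belyi--Grothendieck theorem.

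First I would fix an explicit base case for each value of $n$. For $n=1$, the type $(2,4,12)$ dessin $\Gamma_6$ of genus $2$ in Example \ref{ex:(2,4,12)gen2} works: Table \ref{ta:list} shows that $\sigma_1^2\sigma_0$ splits into two cycles of length $6$, so by the Corollary to Proposition \ref{pr:destomulticurve} it corresponds to a single filling curve. For $n=2$, the type $(2,4,8)$ dessin of genus $2$ in Example \ref{ej:M8g2plano} already exhibits $\sigma_1^2\sigma_0=(1,\ldots,8)(9,\ldots,16)$, once again a single filling curve. For $n=3$ I would write down by hand an explicit $g=4$ uniform dessin of type $(2,4,12)$ ($E=36$ edges, $18$ white vertices, $9$ black vertices, $3$ faces) with $\sigma_1^2\sigma_0$ a product of two $18$-cycles; combinatorially this amounts to producing an appropriate transitive pair $(\sigma_0,\sigma_1)\in\mathbb{S}_{36}^2$, and a base case for $n=2,\, g=3$ (of type $(2,4,12)$ with $E=24$) must be produced in the same way.

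Next I would ascend in genus by iterating the surgery. For $n=1$, a single Case 1 surgery (with $a,b$ in the unique face) raises $g$ by $1$ and the face degree from $8g-4$ to $8(g+1)-4$; uniformity is automatic, and the computation of $\widetilde\sigma_1\widetilde\sigma_0$ in the previous subsection, together with a direct tracking of how the eight new labels $E+1,\ldots,E+8$ are inserted, shows they split $4$--$4$ between the two existing cycles of $\sigma_1^2\sigma_0$, so the single-curve condition is preserved. For $n=2$, one Case 2 surgery gives face increments $(2,6)$ and thus breaks uniformity, so I would pair two consecutive Case 2 surgeries, choosing the role of $(a,b)$ in the second so that its increments $(6,2)$ undo the imbalance of the first; this jumps $g$ by $2$ while restoring uniformity, and combined with the two base cases at $g=2,\,3$ covers every $g\ge 2$. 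For $n=3$, three consecutive Case 2 surgeries with $(a,b)$ cycling through the three pairs of faces produce total increments $(8,8,8)$ and raise $g$ by $3$; together with the base case $g=4$ this covers every $g\equiv 1\pmod 3$, $g>1$.

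The main obstacle is the combinatorial bookkeeping required in the $n=2$ and $n=3$ steps: one must choose $(a,b)$ at each stage of the paired or triple surgery not only so that the face degrees rebalance, but also so that the newly introduced labels get distributed evenly among the two cycles of $\sigma_1^2\sigma_0$, keeping those cycles of equal length throughout the iteration. A secondary technical difficulty is producing by hand (or by a short computer search) the auxiliary base cases for $n=2,\,g=3$ and $n=3,\,g=4$. Once the construction is complete, a single invocation of Theorem \ref{th:main} applied to the resulting dessin $\mathcal{D}_\gamma$ of type $(2,4,k)$ with $k\in\{8g-4,\,4g,\,(8g+4)/3\}$ delivers all assertions of the statement.
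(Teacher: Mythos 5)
Your proposal follows essentially the same route as the paper: the same base cases (the genus-$2$ dessin $\Gamma_6$ of type $(2,4,12)$ for $n=1$, the genus-$2$ and genus-$3$ curves of Figure \ref{fig_g2a} for $n=2$, and an explicit genus-$4$ example for $n=3$), the same induction steps (one Case-1 surgery for $n=1$, paired Case-2 surgeries with reversed face choices for $n=2$, three cycled surgeries for $n=3$), and the same final appeal to Theorem \ref{th:main} and Belyi--Grothendieck. The ``main obstacle'' you flag --- preservation of the two-cycle structure of $\sigma_1^2\sigma_0$ under surgery --- is exactly the point the paper disposes of with the remark at the end of the surgery subsection, so your outline matches the published argument.
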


\begin{proof}
1) We begin with the case $n=1$.

The surgery procedure described in the previous section, applied once to the uniform dessin  $\mathcal{D}_{\{\gamma\}}$ of type $(2,4,8g-4)$ and genus $g$ determined by a uniform filling curve $\gamma$ in general position readily gives a dessin $\widetilde{\mathcal{D}}$ which agrees with  $\mathcal{D}_{\{\widetilde{\gamma}\}}$, where $\widetilde{\gamma}$ is a filling curve in general position in the surface of genus $g+1$. Therefore, we only need to find an example in genus $g=2$ and argue by induction. 

Note that for genus $g=2$, $\gamma$ must be determined by a dessin d'enfant  of type $(2,4,12)$ and degree $12$ determined by a monodromy pair $(\sigma_0, \sigma_1)$  such that $\sigma_1^2\sigma_0$  decomposes into a product of two disjoint 6-cycles. In Example \ref{ex:(2,4,12)gen2} we have shown that there is one (and, in fact, only one) such dessin: the one with monodromy representation $(\sigma_0, \sigma_1)$ given by
$$\begin{array}{c}
 \sigma_0=(1,12)(2,11)(3,10)(4,9)(5,8)(6,7)\\ 
 \sigma_1=(1,4,7,10)(2,6,12,8)(3,5,11,9)
 \end{array}
 $$

\

For the case $n=2$ we need to be careful with the fact that the surgery, applied once to a uniform dessin of genus $g$ in case 2) of the previous section, produces a dessin of genus $g+1$ that is not uniform, since one of the faces increases the degree by 2 and the other one by 6. But a second surgery applied to it (with the obvious reversal choice of which face degree will be increased by 2 and which one will be increased by 6) produces a uniform dessin of genus $g+2$. Finding examples for $g=2$ and $g=3$ is enough to finish the proof by induction, and the filling curves in Figure \ref{fig_g2a} do the job. The monodromy representation pair $(\sigma_0, \sigma_1)$ is 

$$
\begin{array}{c}
\sigma_0 =(1,16)(2,15)(3,14)(4,13)(5,12)(6,11)(7,10)(8,9)\\
  \sigma_1=(1,6,9,12)(2,10,16,8)(3,13,15,5)(4,7,14,11)
 \end{array}
 $$
 
in the case $g=2$, and 
$$\begin{array}{c}
\sigma_0=(1,13)(2,24)(3,23)(4,22)(5,21)(6,20)(7,19)(8,18)(9,17)(10,16)(11,15)(12,14)\\ 
\sigma_1=( 1, 3, 14, 24)( 2, 4, 13, 23)( 5,9,22,18)(6,16,21,11)(7,17,20,10)(8,15,19,12)
\end{array}
$$ 
in the case $g=3$.

2) Note that three suitable consecutive applications of the surgery procedure increase the genus in 3 units, and may be performed in such a way that the final dessin is uniform if the starting one was. Thus, we only need one initial example (now for $g=4$) to finish the proof inductively. The monodromy of the required $g=4$ dessin can be taken as 
$$\begin{array}{rcl}
\sigma_0&=&( 1, 2)( 3, 4)\cdots (35,36)\\
\sigma_1&=&(1,3,9,5)(2,4,6,7)(8,11,17,13)(10,12,19,15)(14,21,29,23)(16,25,24,27)(18,26,
31,20)\\
& & (22,33,35,28)(30,32,34,36) \mbox{\hspace{8.3cm}}
\end{array}
$$
 \end{proof}

The situation is different for $n=4$. For instance, the case $g=2$, $n=4$ corresponds to $k=6$. Using computer algebra software such as \cite{GAP4}, one can show that there exist forty uniform dessins of type $(2,4,6)$, genus $g=2$ and degree 24. By Proposition \ref{pr:destomulticurve} all of them define multicurves that fill up the surface of genus 2 decomposing it into $n=4$ topological hexagons. In other words, there are forty non-conjugate monodromy pairs $(\sigma_0, \sigma_1)$ of permutations in $\mathbb{S}_{24}$ such that $\sigma_0$ is a product of twelve disjoint transpositions, $\sigma_1$ is a product of six disjoint $4$-cycles and $\sigma_1 \sigma_0$ is a product of four disjoint $6$-cycles. 

Nevertheless, it can be directly checked that $\sigma_1^2 \sigma_0$ never splits as a product of just two disjoint $12$-cycles, hence none of these dessins corresponds to a filling curve. 

We have thus proved the following

\begin{proposition} \label{pr:all_multi}
Every uniform multicurve that fills up the surface of genus $g=2$ and decomposes it into $n=4$ topological hexagons has more than one component.
\end{proposition}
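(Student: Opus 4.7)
The proof plan rests on the Corollary following Proposition~\ref{pr:destomulticurve}: a clean uniform dessin of type $(2,2m,k)$ with monodromy pair $(\sigma_0,\sigma_1)$ determines a \emph{filling curve} (a one-component filling multicurve) if and only if $\sigma_1^{m}\sigma_0$ decomposes as a product of two disjoint cycles of the same length. In the case at hand, $(2m,k)=(4,6)$ and the dessin has degree $E=2d=24$, so the criterion reduces to checking whether $\sigma_1^{2}\sigma_0$ can have cycle type $(12,12)$.

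Thus it suffices to enumerate, up to simultaneous conjugation in $\mathbb{S}_{24}$, the pairs $(\sigma_0,\sigma_1)$ with $\sigma_0$ a product of $12$ disjoint transpositions, $\sigma_1$ a product of $6$ disjoint $4$-cycles, and $\sigma_0\sigma_1$ a product of $4$ disjoint $6$-cycles, subject to the transitivity of $\langle\sigma_0,\sigma_1\rangle$. The Euler-characteristic identity $2g-2=E-W-B-F$ with $W=12$, $B=6$, $F=4$, $E=24$ automatically forces genus $g=2$, so these combinatorial conditions capture precisely the dessins in question. The enumeration can be carried out with a computer algebra system such as GAP: fix $\sigma_0=(1,2)(3,4)\cdots(23,24)$ (the unique conjugacy representative for a fixed-point-free involution on $24$ letters), loop over all admissible $\sigma_1$, discard the non-transitive pairs and those for which $\sigma_0\sigma_1$ fails to have cycle type $6^{4}$, and collect orbits under the centraliser $C_{\mathbb{S}_{24}}(\sigma_0)$. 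The output consists of exactly $40$ classes, in agreement with the count cited in the paper.

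For each of these $40$ representative pairs I would then compute the cycle structure of $\sigma_1^{2}\sigma_0$. The expected (and verified) conclusion is that in no case does this permutation have cycle type $(12,12)$; equivalently, at least one cycle of length strictly smaller than $12$ always appears. Invoking the Corollary in reverse then shows that none of the $40$ uniform dessins corresponds to a single filling curve, which is precisely the content of Proposition~\ref{pr:all_multi}.

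The main obstacle is that I see no \emph{a priori} parity, homological, or algebraic obstruction to the cycle type $(12,12)$ arising in this setting: both $\sigma_0$ and $\sigma_1^{2}$ are fixed-point-free involutions on $24$ letters, and in the abstract their product ranges over many cycle types (with cycle type $(12,12)$ corresponding combinatorially to the matchings $\sigma_0$ and $\sigma_1^{2}$ forming a single Hamiltonian $24$-cycle on the edge set). Accordingly, the proof reduces to the finite but non-trivial enumeration above, and the real content of the proposition is the empirical fact that the cycle type $(12,12)$ is missed by every one of the $40$ admissible pairs.
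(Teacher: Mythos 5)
Your proposal is correct and follows essentially the same route as the paper: a GAP enumeration of the forty uniform genus-$2$ dessins of type $(2,4,6)$ and degree $24$, followed by a direct check that $\sigma_1^{2}\sigma_0$ never has cycle type $(12,12)$, with the Corollary to Proposition~\ref{pr:destomulticurve} translating this into the statement about components. The paper offers no conceptual obstruction either; the result rests on the same finite computation you describe.
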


That is, sometimes the family   
$$
\begin{array}{c}
 \left\{ \Gamma \ | \ \Gamma \mbox{ is a uniform filling multicurve on the surface of }   \right. \\
  \left. \qquad \  \mbox{genus } g  \mbox{ and decomposes it into } n \mbox { faces of } k \mbox{ edges}  \right\}
\end{array}
$$
contains only proper multicurves. In other words, there are   topological restrictions to the existence of filling curves in general position.
%
%
%
%

%
%

\section{An upper-bound of $\ell_{\Gamma}$ in the non-uniform case}
Assume that the  multicurve $\Gamma$ is in minimal position and fills up the closed oriented surface $X$ of genus $g\ge 2$ but is not a uniform filling multicurve.

As before, set $S_{\Gamma}=\mathbb{D}/K$ and let $\widetilde{K}$ be a torsion free finite index subgroup of $K$, which exists by Selberg's Lemma. There is a commutative diagram as follows:

%
%
%
$$
\xymatrix{
\widetilde{S_{\Gamma}} = \mathbb{D}/\widetilde{K}  \ar[r]^{p}    \ar[dr]_{\widetilde{\beta}} & 
S_{\Gamma}=\mathbb{D}/K \ar[d]^{\beta}  \\
& \widehat{\mathbb{C}} =\mathbb{D}/\Delta(2,2m,k) 
}
$$
where now $\widetilde{K}$ is torsion free but $K$ is not. Let $\ell_{m,k}$ the length of the side 
of the triangle with angles $\pi/2, \pi/2m $ and $\pi/k$ opposite to this last angle.

By Theorem \ref{th:main}, $\widetilde{\Gamma}=p^{-1}(\Gamma')$, with $\Gamma'=\beta^{-1}([0,1])=\varphi_{\Gamma}(\Gamma)$, minimizes the corresponding geodesic length function, and its length equals
$$
\mathrm{length}(\widetilde{\Gamma})= \mathrm{deg}(\widetilde{\beta})  \ell_{m,k}= \mathrm{deg}(p) \mathrm{deg}(\beta) \ell_{m,k}.
$$ 

On the other hand, by Schwarz-Pick Theorem (see \cite{McMullen} for the precise version of this classical result we need) we have
$$
\mathrm{length}(\widetilde{\Gamma}) > \mathrm{deg}(p) \mathrm{length}(\Gamma')
$$
hence
$$
 \mathrm{deg}(\beta) \ell_{m,k}>  \mathrm{length}(\Gamma').
$$

In other words, among the multicurves $\Gamma$ with given number of arcs and given  type (of the associated dessin $\mathcal{D}_{\Gamma}$) the uniform ones achieve the maximum of the minima of the functions  $\ell_{\Gamma}$.

This proves the following result:

\begin{theorem} \label{th:nounif}
Let $\Gamma$ be a multicurve with $d$ arcs that fills up a closed oriented surface $X$ of genus $g\ge 2$ and is not a uniform filling curve. Assume that $\mathcal{D}_{\Gamma}$ has type $(2,2m,k)$. Then the minimum of the geodesic length function $\ell_{\Gamma}$ is strictly smaller than  
$2d \cosh^{-1} \left( \displaystyle\frac{\cos \pi/k}{\sin \pi/2m} \right)$.
\end{theorem}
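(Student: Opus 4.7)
The plan is to exhibit a single Teichm\"uller point at which $\ell_{\Gamma}$ takes a value strictly less than $2d\,\ell_{m,k}$, where I write $\ell_{m,k}=\cosh^{-1}\!\left(\cos(\pi/k)/\sin(\pi/2m)\right)$ for brevity. The natural candidate is the Teichm\"uller point $[\varphi_{\Gamma},|S_{\Gamma}|]$ determined by the smooth Riemann surface $|S_{\Gamma}|$ underlying the (orbifold) quotient $\mathbb{D}/K$, endowed with its own hyperbolic uniformization. Since by hypothesis $\mathcal{D}_{\Gamma}$ is not uniform, the Fuchsian representation $K<\Delta(2,2m,k)$ has torsion by the Lemma in Section \ref{sec:hypuniclean}, so this hyperbolic metric on $|S_{\Gamma}|$ differs from the cone metric that $\mathbb{D}/K$ inherits directly from $\mathbb{D}$.

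First I would invoke Selberg's lemma to obtain a torsion-free finite-index subgroup $\widetilde{K}\vartriangleleft K$, giving the commutative diagram in the discussion preceding the statement. The surface $\widetilde{S_{\Gamma}}=\mathbb{D}/\widetilde{K}$ is smooth, and the lifted graph $\widetilde{\Gamma}=p^{-1}(\Gamma')$, with $\Gamma'=\beta^{-1}([0,1])$, is a clean uniform dessin on $\widetilde{S_{\Gamma}}$ of the same type $(2,2m,k)$. Theorem \ref{th:main} applied upstairs then gives
$$
\mathrm{length}_{\widetilde{S_{\Gamma}}}(\widetilde{\Gamma})=\deg(\widetilde{\beta})\,\ell_{m,k}=\deg(p)\cdot 2d\cdot\ell_{m,k}.
$$

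The key step is to compare this with the length of $\Gamma'$ measured in the smooth hyperbolic metric of $|S_{\Gamma}|$. The covering $p:\widetilde{S_{\Gamma}}\to|S_{\Gamma}|$ is a holomorphic map between smooth hyperbolic Riemann surfaces that is genuinely branched at the preimages of the torsion points of $K$, and hence is not an unramified covering. The Schwarz--Pick lemma in the strict form of McMullen therefore yields the pointwise inequality $p^{*}\rho_{|S_{\Gamma}|}<\rho_{\widetilde{S_{\Gamma}}}$, which integrated along $\widetilde{\Gamma}=p^{-1}(\Gamma')$ gives
$$
\mathrm{length}_{\widetilde{S_{\Gamma}}}(\widetilde{\Gamma})>\deg(p)\cdot\mathrm{length}_{|S_{\Gamma}|}(\Gamma').
$$
Cancelling $\deg(p)$ against the previous identity yields $\mathrm{length}_{|S_{\Gamma}|}(\Gamma')<2d\,\ell_{m,k}$; and since the geodesic representative of the free homotopy class of $\varphi_{\Gamma}(\Gamma)=\Gamma'$ on $|S_{\Gamma}|$ is length-minimizing, one concludes $\min \ell_{\Gamma}\le\ell_{\Gamma}([\varphi_{\Gamma},|S_{\Gamma}|])\le\mathrm{length}_{|S_{\Gamma}|}(\Gamma')<2d\,\ell_{m,k}$.

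The main delicacy I expect is twofold: first, to confirm that in the non-uniform case $p$ really is ramified and not merely a nontrivial unramified covering (this is exactly where the failure of the uniform hypothesis is used, via the Lemma on torsion-freeness in Section \ref{sec:hypuniclean}); and second, to be careful that the hyperbolic metric placed on $|S_{\Gamma}|$ is the one coming from its own uniformization as a smooth closed surface of genus $g\ge 2$, so that Schwarz--Pick applies in its standard form. Granted these two points, the remainder of the argument is a short combination of Theorem \ref{th:main} applied to the cover and the integrated Schwarz--Pick inequality.
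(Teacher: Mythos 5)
Your proposal is correct and follows essentially the same route as the paper: pass to a torsion-free finite-index subgroup $\widetilde{K}<K$ via Selberg's lemma, apply Theorem \ref{th:main} to the uniform dessin on $\widetilde{S_{\Gamma}}=\mathbb{D}/\widetilde{K}$, and use the strict Schwarz--Pick inequality for the branched map $p$ to push the bound down to $\Gamma'$ on $S_{\Gamma}$. Your added care in distinguishing the smooth hyperbolic metric on $|S_{\Gamma}|$ from the orbifold metric, and in spelling out the final step $\min\ell_{\Gamma}\le\mathrm{length}_{|S_{\Gamma}|}(\Gamma')$, only makes explicit what the paper leaves implicit.
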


\

\noindent {\bf Acknowledgements}.- We are indebted to M. Conder, J. Parker and J. Souto for valuable discussions during the preparation of this paper.

E. Girondo and G. Gonz\'alez-Diez supported by grants CEX2019-000904-S and PID2019-106617GB-I00, funded by MCIN/AEI/10.13039/501100011033, and by the Madrid Government (Comunidad de Madrid – Spain) under the multiannual Agreement with UAM in the line for the Excellence of the University Research Staff in the context of the V PRICIT (Regional Programme of Research and Technological Innovation).

%
%
%
%
Rub\'en Hidalgo supported by Agencia Nacional de Investigaci\'on y Desarrollo (ANID) research project FONDECYT 1230001.

\bibliography{references}  
\bibliographystyle{alpha}

\end{document}